\numberwithin{equation}{section}
\def\pic{{\rm{Pic}}\,}
\newcommand{\isoto}{\overset{\!\sim}{\to}}
\newcommand{\et}{{\rm {\acute et}}}
\DeclareMathAlphabet{\mathbbmsl}{U}{bbm}{m}{sl}
\newcommand{\abe}{{\rm{\bf Ab}}}
\newcommand{\fl}{{\rm fl}}
\newcommand{\spp}{S^{\lle\le\prime}}
\newcommand{\sppp}{S^{\le\lle\prime\prime}}
\def\sfs{S_{\fl}^{\e\sim}}
\def\bg{{\mathbb G}}
\def\si{\Sigma}
\def\hom{{\rm{Hom}}\e}
\definecolor{labelkey}{rgb}{1,0,0}
\newcommand{\spec}{\mathrm{ Spec}\,}
\newcommand{\s}{\mathscr }
\def\ofs{\mathcal O_{\lbe\lbe F\lbe,\e \si}}
\def\oks{\mathcal O_{\lbe\lbe K,\e \Sigma}}
\def\okfs{\mathcal O_{\lbe\lbe K,\e \Sigma}\lbe\otimes_{\e\mathcal O_{\lbe\lbe F\be,\e \si}}\!\mathcal O_{\lbe K,\e \Sigma}}
\def\e{\kern 0.06em}
\def\be{\kern -.1em}
\def\le{\kern 0.03em}
\def\lle{\kern 0.015em}
\def\lbe{\kern -.025em}
\newcommand{\sh}{\kern -.4em\phantom{a}^{\mathbf{\sim}}}
\newcommand{\lra}{\longrightarrow}
\def\be{\kern -.1em}
\def\le{\kern 0.03em}
\def\lle{\kern 0.04em}
\def\lbe{\kern -.025em}
\newcommand{\krn}{\mathrm{Ker}\e}
\newcommand{\cok}{\mathrm{Coker}\e}
\def\e{\kern 0.08em}
\newtheorem{lemma}{Lemma}[section]
\newtheorem{theorem}[lemma]{Theorem}
\newtheorem{proposition-definition}[lemma]{Proposition-Definition}
\newtheorem{corollary}[lemma]{Corollary}
\newtheorem{proposition}[lemma]{Proposition}
\theoremstyle{definition}
\newtheorem{definition}[lemma]{Definition}
\theoremstyle{remark}
\newtheorem{remark}[lemma]{Remark}
\newtheorem{examples}[lemma]{Examples}
\begin{document}

\input xy     %%for diagrams
\xyoption{all}%%

\title[\v{Cech} cohomology and capitulation]{\v{Cech} cohomology and the Capitulation kernel}

\subjclass[2010]{Primary 11R29; Secondary 14F20}
	
\author{Cristian D. Gonz\'alez-Avil\'es}
\address{Departamento de Matem\'aticas, Universidad de La Serena,
La Serena, Chile} \email{cgonzalez@userena.cl}

\keywords{Capitulation kernel, global units, \v{C}ech cohomology}
	
\thanks{The author was partially supported by Fondecyt grant
1160004}

\begin{abstract} We discuss the capitulation kernel associated to a degree $n$ covering using \v{C}ech cohomology and the Kummer sequence. The main result is a five-term exact sequence that relates the capitulation kernel to the \v{C}ech cohomology of the $n$-th roots of unity and a certain subquotient of the group of units modulo $n$-th powers.
\end{abstract}
	
\maketitle

\section{Introduction}

Let $F$ be a global field, let $\si$ be a nonempty finite set of primes of $F$ containing the archimedean primes and let $\ofs$ and $C_{F,\e\si}$ denote, respectively, the ring of $\si\le$-\le integers and the $\si$-ideal class group of $F$. Now let $K/F$ be a finite Galois extension with Galois group $\Delta$ of order $n\geq 2$ and let $\oks$ and $C_{K,\e\si}$ denote, respectively, the ring of $\si_{K}\le$-\le integers and the $\si_{K}$-ideal class group of $K$, where $\si_{K}$ denotes the set of primes of $K$ lying over the primes in $\si$. 
The extension of ideals from $F$ to $K$ defines a map
\begin{equation}\label{ccap}
j_{K\be/\lbe F,\e\si}\colon C_{F,\e\si}\to C_{K,\e\si}
\end{equation}
which is often called the {\it capitulation map}, after Arnold Scholz. An ideal class in $C_{F,\e\si}$ is said to {\it capitulate} in $K$ if it lies in $\krn\e j_{K\be/\lbe F,\e\si}$, which is often called the {\it capitulation kernel} associated to the pair $(K/F,\si\le)$. It was shown in \cite[Theorem 2.3]{ga07} that $\krn\e j_{K\be/\lbe F,\e\si}$ is naturally isomorphic to the kernel of the canonical localization map in $\Delta$-cohomology
\[
H^{\e 1}\lbe(\Delta,\oks^{\e *}\lle)\to \prod_{v\notin \si} H^{\e 1}\lbe(\Delta_{\e w_{v}},\mathcal O_{\!w_{v}}^{\e*}\lbe),
\]
where, for each prime $v\notin \si$, $w_{v}$ denotes a fixed prime of $K$ lying over $v$ and $\Delta_{\e w_{v}}$ is the corresponding decomposition subgroup of $\Delta$. Unfortunately, the $n$-torsion abelian group $H^{\e 1}\lbe(\Delta,\oks^{\e *}\lle)$ (and therefore also $\krn\e j_{K\be/\lbe F,\e\si}$) is difficult to compute and can vary widely. In this paper we obtain additional information on $\krn\e j_{K\be/\lbe F,\e\si}$ that should be helpful in determining its structure, at least in some particular cases of interest. To explain our results, let $S=\spec \ofs$, $\spp=\spec \oks$ and note that, via the well-known isomorphisms $C_{F,\e\si}\simeq \pic\e S\simeq H^{\e 1}\lbe(S_{\et},\bg_{m})$ (and similarly for $\spp\e$), the map $j_{K\be/\lbe F,\e\si}$ can be identified with the canonical restriction map in \'etale  cohomology $j^{(1)}\colon H^{\e 1}\lbe(S_{\et},\bg_{m})\to H^{\e 1}\lbe(\spp_{\et},\bg_{m})$. Now, it is well-known \cite[III, proof of Proposition 4.6, p.~123]{mi1} that $\krn\e j^{(1)}$ is canonically isomorphic to the first \v{C}ech cohomology group $\check{H}^{1}(\spp\be/\lbe S,\bg_{m})$ associated to the fppf covering $\spp\be/\lbe S$. Thus there exists a canonical isomorphism of finite $n$-torsion abelian groups $\krn\e j_{K\be/\lbe F,\e\si}\simeq \check{H}^{1}(\spp\be/\lbe S,\bg_{m})$. This observation motivated the  central idea of this paper, which is that the tools afforded by the \v{C}ech cohomology theory and the cohomology sequences arising from the Kummer sequence $0\to\mu_{\le n}\to\bg_{m}\overset{\!n}{\lra}\bg_{m}\to 0$ can be combined to yield new information on $\krn\e j_{K\be/\lbe F,\e\si}$.

Set
\begin{equation}\label{cool0}
\Psi(K/F,\Sigma\e)=\{\e u\in \oks^{\e *}\colon u\otimes u^{-1}\in ((\okfs)^{*})^{n}\e\}.
\end{equation}
Then the following holds.

\begin{theorem} {\rm ($=$Proposition \ref{mcor2})} There exists a canonical exact sequence of finite $n$-torsion abelian groups
\[
\begin{array}{rcl}
0&\to& \ofs^{\e *}\be\cap\be (\oks^{\e *}\e)^{n}\be/(\ofs^{\e *}\e)^{n}\to \check{H}^{\le 1}(\oks\e/\ofs, \mu_{\le n})\to \krn\e j_{K\be/\lbe F,\e\si}\\
&\to& \Psi(K/F,\Sigma\le)/\ofs^{\e *}\e (\oks^{\e *})^{n}\to \check{H}^{\le 2}(\oks\e/\ofs, \mu_{\le n}),
\end{array}
\]
where $\Psi(K/F,\Sigma\e)$ is the subgroup of $\oks^{\e *}$ given by \eqref{cool0}.
\end{theorem}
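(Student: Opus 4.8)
The plan is to run everything through the Čech complexes of the fppf covering $\spp\to S$ (finite flat of degree $n$, where $\spp=\spec\oks$ and $S=\spec\ofs$), exploiting that the Kummer sequence exhibits $\mu_{n}$ as the kernel of multiplication by $n$ on $\bg_{m}$. Write $R_{p}$ for the $(p+1)$-fold tensor power $\oks\otimes_{\ofs}\cdots\otimes_{\ofs}\oks$, so that $R_{0}=\oks$ and $R_{1}=\okfs$, and let $\check C^{\bullet}(\bg_{m})$ and $\check C^{\bullet}(\mu_{n})$ be the Čech complexes of $\bg_{m}$ and $\mu_{n}$ for this covering, with $p$-th terms $R_{p}^{\e *}$ and $\mu_{n}(R_{p})$; abbreviate their cohomology by $\check H^{i}(\bg_{m})$ and $\check H^{i}(\mu_{n})$. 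Faithfully flat descent gives $\check H^{0}(\bg_{m})=\ofs^{\e *}$, $\check H^{0}(\mu_{n})=\mu_{n}(\ofs)$, and concretely $\{u\in\oks^{\e *}\colon u\otimes u^{-1}=1\}=\ofs^{\e *}$, while $\check H^{1}(\bg_{m})\simeq\krn\e j_{K\be/\lbe F,\e\si}$ by the isomorphism recalled in the Introduction; in particular $\check H^{1}(\bg_{m})$ is killed by $n$.

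Multiplication by $n$ is an endomorphism of the complex $\check C^{\bullet}(\bg_{m})$ whose kernel subcomplex is $\check C^{\bullet}(\mu_{n})$; let $D^{\bullet}$ be its image subcomplex, $D^{p}=(R_{p}^{\e *})^{n}$, and $E^{\bullet}$ its cokernel, $E^{p}=R_{p}^{\e *}/(R_{p}^{\e *})^{n}$. These fit into two short exact sequences of complexes of abelian groups
\[
0\to\check C^{\bullet}(\mu_{n})\to\check C^{\bullet}(\bg_{m})\xrightarrow{\,n\,}D^{\bullet}\to 0,\qquad 0\to D^{\bullet}\to\check C^{\bullet}(\bg_{m})\to E^{\bullet}\to 0,
\]
whose composite $\check C^{\bullet}(\bg_{m})\to D^{\bullet}\hookrightarrow\check C^{\bullet}(\bg_{m})$ is multiplication by $n$. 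Using descent one computes $H^{0}(D^{\bullet})=\ofs^{\e *}\be\cap(\oks^{\e *})^{n}$ and $H^{0}(E^{\bullet})=\Psi(K/F,\Sigma\e)/(\oks^{\e *})^{n}$ (the latter because the differential $E^{0}\to E^{1}$ sends $[u]$ to $[u\otimes u^{-1}]$, so its kernel is cut out by the defining condition \eqref{cool0}), and one checks that the canonical map $\check H^{0}(\bg_{m})=\ofs^{\e *}\to H^{0}(E^{\bullet})$ has image $\ofs^{\e *}(\oks^{\e *})^{n}/(\oks^{\e *})^{n}$.

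Next I would read off the long exact cohomology sequences. From the first short exact sequence, its initial segment (after dividing by the image $(\ofs^{\e *})^{n}$ of multiplication by $n$ on $\check H^{0}(\bg_{m})=\ofs^{\e *}$) gives exactness of
\[
0\to\ofs^{\e *}\be\cap(\oks^{\e *})^{n}/(\ofs^{\e *})^{n}\xrightarrow{\delta_{0}}\check H^{1}(\mu_{n})\xrightarrow{\alpha}\check H^{1}(\bg_{m})\xrightarrow{\bar n}H^{1}(D^{\bullet})\xrightarrow{\delta_{1}}\check H^{2}(\mu_{n}),
\]
with $\alpha$ induced by $\mu_{n}\hookrightarrow\bg_{m}$; this supplies exactness at the first two terms and, crucially, $\krn\bar n=\img\alpha$ and $\krn\delta_{1}=\img\bar n$. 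The long exact sequence of the second short exact sequence, combined with the computation of $H^{0}(E^{\bullet})$, shows that its connecting homomorphism induces an injection $\partial\colon\Psi(K/F,\Sigma\e)/\ofs^{\e *}(\oks^{\e *})^{n}\hookrightarrow H^{1}(D^{\bullet})$ with image $\krn\bigl(\iota\colon H^{1}(D^{\bullet})\to\check H^{1}(\bg_{m})\bigr)$, $\iota$ being induced by $D^{\bullet}\hookrightarrow\check C^{\bullet}(\bg_{m})$. Since $\check H^{1}(\bg_{m})$ is $n$-torsion and $\iota\circ\bar n$ is multiplication by $n$ on it, $\img\bar n\subseteq\krn\iota=\img\partial$; hence $\bar n$ factors through $\partial$ as a map $\beta\colon\check H^{1}(\bg_{m})\to\Psi(K/F,\Sigma\e)/\ofs^{\e *}(\oks^{\e *})^{n}$ with $\krn\beta=\krn\bar n=\img\alpha$, and, putting $\gamma:=\delta_{1}\circ\partial$, one gets $\krn\gamma=\partial^{-1}(\krn\delta_{1}\cap\img\partial)=\partial^{-1}(\img\bar n)=\img\beta$, the middle equality using $\img\bar n\subseteq\img\partial$. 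Concatenating $\delta_{0},\alpha,\beta,\gamma$ produces the asserted five-term sequence; explicitly, if a Čech $1$-cocycle $c$ represents a class in $\check H^{1}(\bg_{m})$ then $c^{\e n}=u\otimes u^{-1}$ for some $u\in\oks^{\e *}$, which forces $u\in\Psi(K/F,\Sigma\e)$, and $\beta([c])=[u]$.

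Finally, $n$-torsion is clear from the descriptions (the two leftmost terms and the fourth are subquotients of $\ofs^{\e *}/(\ofs^{\e *})^{n}$ or $\oks^{\e *}/(\oks^{\e *})^{n}$; the others carry $\mu_{n}$-coefficients or inject into $\krn\e j_{K\be/\lbe F,\e\si}$), and finiteness follows from Dirichlet's unit theorem for $\ofs^{\e *}$ and $\oks^{\e *}$, the finiteness of $C_{F,\e\si}$, and the finiteness of $\check H^{2}(\oks/\ofs,\mu_{n})$. I expect the main obstacle to be not conceptual but bookkeeping: the delicate steps are (i) the descent computation of $H^{0}(E^{\bullet})$ together with the image of $\ofs^{\e *}$ inside it, since this is exactly what yields the fourth term $\Psi/\ofs^{\e *}(\oks^{\e *})^{n}$ rather than merely $\Psi/(\oks^{\e *})^{n}$; and (ii) the verification that $\img\bar n\subseteq\img\partial$ in the assembly step, which rests squarely on $\check H^{1}(\bg_{m})$ being $n$-torsion, and without which $\beta$ is not defined and $\gamma=\delta_{1}\circ\partial$ need not have the right kernel.
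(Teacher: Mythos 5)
Your proof is correct, but it takes a genuinely different route from the paper's. The paper never manipulates the \v{C}ech complex of $\bg_{m}$ directly: it treats the Kummer sequence $0\to G_{n}\to G\overset{n}{\to}G\to 0$ as an exact sequence of fppf \emph{sheaves} (which is where Proposition \ref{isg}, the flatness of $n_{G}$, enters), passes to the induced exact sequence of presheaves $0\to\s H^{\e 0}\lbe(G\e)/n\to\s H^{\e 1}\be(G_{n})\to\s H^{\e 1}\be(G\e)_{n}\to 0$, applies the snake lemma to the diagram \eqref{diag} comparing sections over $S$ with $\check{H}^{\le 0}$ of the covering, and then uses the \v{C}ech-to-derived-functor spectral sequence \eqref{lchec} for $\mu_{n}$ twice --- once to identify $\krn j^{(1)}_{\mu_{n},\fl}$ with $\check{H}^{\le 1}(\spp\be/S,\mu_{n})$ and once to inject $\cok f^{(1)}_{\mu_{n}}$ into $\check{H}^{\le 2}(\spp\be/S,\mu_{n})$. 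You instead factor multiplication by $n$ on the \v{C}ech complex of $\bg_{m}$ through its image subcomplex $D^{\bullet}$ and splice the two resulting long exact sequences, the key point being $\img\bar{n}\subseteq\img\partial$, which rests on the $n$-torsion of $\check{H}^{1}(\bg_{m})\simeq\krn j_{K\be/\lbe F,\e\si}$ (a fact you should justify explicitly, e.g.\ by the ideal norm or the corestriction identity \eqref{nce}, since the whole assembly step hangs on it). Your route is more elementary and self-contained: the only external inputs are the isomorphism of Proposition \ref{ch} and that $n$-torsion statement; you need no fppf cohomology of $\mu_{n}$, no spectral sequence, and no surjectivity of $n$ on $\bg_{m}$ as a sheaf, since taking the image subcomplex of presheaves requires nothing. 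What the paper's formulation buys is uniformity over all admissible pairs $(G,f\e)$ (Theorem \ref{big}) and the identification of the \v{C}ech terms with kernels of genuine restriction maps in fppf cohomology, which underlies the description of the maps in Remark \ref{maps}; reassuringly, your maps agree with those --- $\alpha$ is induced by $\mu_{n}\hookrightarrow\bg_{m}$ and your $\beta$ is the same ``take an $n$-th root of a trivialized cocycle'' map as the paper's snake-lemma connecting morphism. Two small points to tidy: note that the kernel subcomplex of multiplication by $n$ really is the paper's \v{C}ech complex of the presheaf $\mu_{n,\e S}$ (immediate from $\mu_{n}(R_{p})=\krn(n\colon R_{p}^{\e *}\to R_{p}^{\e *})$), and for finiteness observe that each $\mu_{n}(R_{p})$ is the $n$-torsion of the unit group of a finite flat $\ofs$-algebra, hence finite, so $\check{H}^{\le 1}$ and $\check{H}^{\le 2}$ with $\mu_{n}$-coefficients are finite.
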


The following corollary of the proposition is part of Corollary \ref{mcor3}.

\begin{corollary}\label{mcor3.0} Assume that $\Sigma$ contains all primes of $F$ that ramify in $K$. Assume, furthermore, that $n$ is odd and $\mu_{\le n}(K\e)=\{1\}$. Then there exists  a canonical isomorphism of finite $n$-torsion abelian groups
\[
\krn\e j_{K\be/\lbe F,\e\si}\isoto \Psi(K/F,\Sigma\le)/\ofs^{\e *}\e (\oks^{\e *})^{n}.
\]
In particular, at most $[\e\oks^{\e *}\e\colon \ofs^{\e *}\e (\oks^{\e *})^{n}\e]$ $\Sigma$-ideal classes of $F$ capitulate in $K$. 
\end{corollary}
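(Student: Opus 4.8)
The plan is to read the isomorphism directly off the five-term exact sequence of the Theorem (Proposition~\ref{mcor2}). In that sequence the group immediately preceding $\krn\e j_{K\be/\lbe F,\e\si}$ is $\check H^{\le 1}(\oks\e/\ofs,\mu_{\le n})$, and the group immediately following $\Psi(K/F,\Sigma\e)/\ofs^{\e *}(\oks^{\e *})^{n}$ is $\check H^{\le 2}(\oks\e/\ofs,\mu_{\le n})$. Hence, once we know that both of these \v{C}ech groups vanish under the present hypotheses, exactness forces the canonical map $\krn\e j_{K\be/\lbe F,\e\si}\to \Psi(K/F,\Sigma\e)/\ofs^{\e *}(\oks^{\e *})^{n}$ to be injective (its kernel is the image of the zero group $\check H^{\le 1}$) and surjective (its cokernel injects into the zero group $\check H^{\le 2}$), so that it is the asserted isomorphism. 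Everything therefore reduces to proving
\[
\check H^{\le 1}(\oks\e/\ofs,\mu_{\le n})=\check H^{\le 2}(\oks\e/\ofs,\mu_{\le n})=0.
\]

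To prove this vanishing I would identify these \v{C}ech groups with group cohomology. Since $\Sigma$ contains every prime of $F$ that ramifies in $K$, the ring extension $\oks/\ofs$ is étale, so that $\spp=\spec\oks\to S=\spec\ofs$ is a connected $\Delta$-torsor for the étale topology (hence also for the fppf topology); consequently the $(p+1)$-fold fibre product $\spp\times_{S}\cdots\times_{S}\spp$ is $\Delta$-equivariantly isomorphic to $\coprod_{\Delta^{\le p}}\spp$ for every $p\geq 0$. Evaluating $\mu_{\le n}$ on these fibre products then identifies the \v{C}ech complex of the covering $\spp/S$ with the standard inhomogeneous bar complex computing $H^{\bullet}(\Delta,\mu_{\le n}(\oks))$, whence $\check H^{\le i}(\oks\e/\ofs,\mu_{\le n})\simeq H^{\le i}(\Delta,\mu_{\le n}(\oks))$ for every $i\geq 0$ (the same comparison is in any case available from the general \v{C}ech-cohomological machinery of the earlier sections). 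Now $\mu_{\le n}(\oks)=\mu_{\le n}(K\e)$ because an $n$-th root of unity is a unit that is integral over $\ofs$ and therefore lies in $\oks$, and $\mu_{\le n}(K\e)=\{1\}$ by hypothesis. This is precisely where the assumption $\mu_{\le n}(K\e)=\{1\}$ enters — note that in characteristic $\neq 2$ it already forces $n$ to be odd, since $-1\in\mu_{\le n}(K\e)$ whenever $n$ is even. Hence $H^{\le i}(\Delta,\mu_{\le n}(\oks))=H^{\le i}(\Delta,\{1\})=0$ for all $i\geq 1$, in particular for $i\in\{1,2\}$, and the first paragraph now yields the isomorphism $\krn\e j_{K\be/\lbe F,\e\si}\isoto \Psi(K/F,\Sigma\e)/\ofs^{\e *}(\oks^{\e *})^{n}$.

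For the final numerical statement, $\Psi(K/F,\Sigma\e)$ is by construction a subgroup of $\oks^{\e *}$ that contains $\ofs^{\e *}(\oks^{\e *})^{n}$; therefore the number of $\Sigma$-ideal classes of $F$ that capitulate in $K$, namely the order of $\krn\e j_{K\be/\lbe F,\e\si}$, is $[\e\Psi(K/F,\Sigma\e)\colon\ofs^{\e *}(\oks^{\e *})^{n}\e]\leq[\e\oks^{\e *}\colon\ofs^{\e *}(\oks^{\e *})^{n}\e]$, the latter index being finite because $\oks^{\e *}$ is finitely generated by Dirichlet's $\Sigma$-unit theorem. The only step of real substance here is the vanishing of the two \v{C}ech groups, and inside it the one point requiring an argument is their identification with $\Delta$-cohomology — which is exactly where the hypothesis that $\Sigma$ contain the ramified primes is used, for otherwise $\spp/S$ need not be a $\Delta$-torsor and its $\mu_{\le n}$-cohomology may fail to vanish.
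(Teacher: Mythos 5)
Your proof is correct and takes essentially the same route as the paper: since $\Sigma$ contains the ramified primes, $\spec\oks\to\spec\ofs$ is a Galois covering with group $\Delta$, so \eqref{gpc} identifies the two \v{C}ech groups in Proposition \ref{mcor2} with $H^{\le i}(\Delta,\mu_{\le n}(K\e))$, which vanish when $\mu_{\le n}(K\e)=\{1\}$, and the isomorphism and the index bound then follow exactly as you describe.
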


\smallskip

The preceding discussion, which is a summary of the contents of the last section of the paper (section \ref{last}), is a particular case of the more general developments of section \ref{scap}\,. Our main theorem, namely Theorem \ref{big}, applies to {\it admissible pairs} $(G,f\e)$ (see Definition \ref{adm}\,), of which $(\bg_{m,\e S}, \spp\be/S\e)$ is an example. See Examples \ref{exs} for additional examples of admissible pairs. Section \ref{pre} consists of various preliminaries.

\section{Preliminaries}\label{pre}

\subsection{Generalities} The category of abelian groups will be denoted by $\abe$. If  $n\geq 1$ is an integer and $A$ is an object of an abelian category $\s A$, $A_{\e n}$ (respectively, $A/ n\le$) will denote the kernel (respectively, cokernel) of the multiplication by $n$ morphism on $A$. If $\psi\colon A\to B$ is a morphism in $\s A$, $\psi_{\le n}\colon A_{\e n}\to B_{\le n}$ and $\psi/n\colon A/n\to B/n$ will denote the morphisms in $\s A$ induced by $\psi$. Note that
\begin{equation}\label{kn}
\krn (\psi_{\le n})=(\krn\psi)_{n}.
\end{equation}
However, $\krn (\psi/n)\neq (\krn\psi)/n$ (in general).

\smallskip

{\it All schemes appearing in this paper are tacitly assumed to be non-empty.}

\smallskip

If $S$ is a scheme and $\tau$ denotes either the \'etale ($\et$) or fppf ($\fl$) topology on $S$, $S_{\tau}$ will denote the small $\tau$ site over $S$. Thus $S_{\tau}$ is the category of $S$-schemes that are \'etale (respectively, flat) and locally of finite presentation over $S$ equipped with the \'etale (respectively, fppf) topology. If $f\colon \spp\to S$ is a faithfully flat morphism locally of finite presentation, then $f$ is an fppf covering of $S$. We will write $S_{\tau}^{\e\sim}$ for the (abelian) category of sheaves of abelian groups on $S_{\tau}$. If $G$ is a commutative $S$-group scheme, then the presheaf represented by $G$ is an object of $S_{\tau}^{\e\sim}$. In particular, if $f\colon \spp\to S$ is as above, the induced map $G(\lle f\lle)\colon G(S\e)\hookrightarrow G(\spp)$ is an injection that will be regarded as an inclusion. If $n\geq 1$, the object $G_{n}$ of $S_{\tau}^{\e\sim}$  is represented by the $S$-group scheme $G\times_{n_{\le G},\e G,\e\varepsilon}S$, where $n_{\le G}\colon G\to G$ is the $n$-th power morphism on $G$ and $\varepsilon\colon S\to G$ is the unit section of $G$.

Let $\varGamma_{\!S}\colon S_{\tau}^{\e\sim}\to\abe$ be the $S$-section functor on $S_{\tau}^{\e\sim}$. For every object $\s F$ of $S_{\tau}^{\e\sim}$ and every integer $r\geq 0$, let $H^{\le r}\lbe(S_{\tau},\s F\le)=R^{\e r}\be\varGamma_{\!S}(\s F)$ be the corresponding $r$-th $\tau$ cohomology group of $\s F$. If $G$ is a commutative $S$-group scheme (regarded as an object of $S_{\tau}^{\e\sim}$) and $\spp\to S$ is a morphism of schemes, we will write $H^{\le r}\lbe(\spp_{\tau},G\e)$ for $H^{\le r}\lbe(\spp_{\tau},G_{\be S^{\lle\prime}}\be)$, where $G_{S^{\lle\prime}}=G\times_{S}\spp$. The abelian group $H^{\le 1}(S_{\tau},G\e)$ will be identified with the group of isomorphism classes of right (sheaf) $G$-torsors over $S$ relative to the $\tau$ topology on $S$. If $G=\bg_{m,\e S}$ (respectively, $G=\mu_{\le n,\e S}$ for some integer $n\geq 2$), the groups $H^{\le r}\lbe(S_{\tau},G\e)$ will be denoted by $H^{\le r}\lbe(S_{\tau},\bg_{m})$ (respectively, $H^{\le r}\lbe(S_{\tau},\mu_{\e n})$). If $G$ is smooth over $S$, the groups $H^{\le r}\lbe(S_{\fl},G\e)$ and $H^{\le r}\lbe(S_{\et},G\e)$ will be identified via \cite[Theorem 11.7(1), p.~180]{dix}. 
If $f\colon\spp\to S$ is a morphism of schemes, the {\it $r$-th restriction morphism associated to $(G,f\e)$} is the morphism of abelian groups
\begin{equation}\label{ires}
j^{(r)}=R^{\e r}\be\varGamma_{\!S}(f\le)\colon H^{\le r}\lbe(S_{\tau},G\e)\to H^{\le r}\lbe(\spp_{\tau},G\e). 
\end{equation}
When reference to $G$ and $\tau$ (respectively, $G, \e S^{\lle\prime}\be/ S$ and $\tau$) is necessary, $j^{(r)}$ will be denoted by $j_{\le G,\e\tau}^{(r)}$ (respectively, $j_{\le G,\e S^{\lle\prime}\be/ S,\e\tau}^{(r)}$). If $G$ is smooth over $S$, the maps $j_{\le G,\e\fl}^{(r)}$ and $j_{\le G,\e\et}^{(r)}$ will be identified and denoted by  $j_{\le G}^{(r)}$, i.e.,
\begin{equation}\label{sm}
j_{\le G}^{(r)}=j_{\le G,\e\fl}^{(r)}=j_{\le G,\e\et}^{(r)}\qquad\text{(if $G$ is smooth over $S\e$)}.
\end{equation}
Note that $j^{(0)}$ is the inclusion $G(\lle f\lle)\colon G(S\e)\hookrightarrow G(\spp)$. The map
\begin{equation}\label{cap1}
j^{(1)}\colon H^{\e 1}\lbe(S_{\tau},G\e)\to H^{\e 1}\lbe(\spp_{\tau},G\e)
\end{equation}
sends the class in $H^{\e 1}\lbe(S_{\tau},G\e)$ of a right (sheaf) $G$-torsor $X$ over $S$ (relative to the $\tau$ topology on $S\e$) to the class in $H^{\e 1}\lbe(\spp_{\tau},G\e)$ of the right (sheaf) $G_{\be S^{\lle\prime}}\e$-\e torsor $X_{\lbe S^{\lle\prime}}$ over $\spp$. A class $[\lle X\lle]\in H^{\e 1}\lbe(S_{\tau},G\e)$ lies in $\krn\e j^{(1)}$ if, and only if, $X(\spp\e)\neq\emptyset$. See \cite[III, comments after Proposition 4.1, p.~120]{mi1}. After the classical case $G=\bg_{m,\e \mathcal O}$, where $\mathcal O$ is the ring of integers of a number field, we call $\krn j_{G,\e\et}^{(1)}$ the {\it capitulation kernel} associated to the pair $(G,f\e)$. 

\smallskip

For every scheme $X$, the {\it group of global units on $X$} is the abelian group
\begin{equation}\label{gu}
U(X)=\varGamma(X, \mathcal O_{\lbe X}\lbe)^{*}=\bg_{m,\le X}(X\lle).
\end{equation}
We will identify $\pic X$ and $H^{\le 1}(X_{\et},\bg_{m})$ via  \cite[Theorem 4.9, p.~124]{mi1}.
A morphism of schemes $f\colon \spp\to S$ induces a morphism of abelian groups
\begin{equation}\label{pmap}
\pic\be f\colon \pic S\to \pic \spp, [\e E\e]\mapsto [\e E\be\times_{S}\be \spp\,],
\end{equation}
where $[\e E\e]$ denotes the isomorphism class of the right (sheaf) $\bg_{m,\le S}$-torsor $E$ and $[\e E\be\times_{S}\be \spp\,]$ denotes the isomorphism class of the right (sheaf) $\bg_{m,\le S^{\prime}}\le$-\le torsor $E\be\times_{S}\be \spp$.

\subsection{\v{C}ech cohomology} If $n\geq 1$ is an integer and $X$ is an $S$-scheme, $X^{n}$ will denote the $S$-scheme defined recursively by $X^{1}=X$ and $X^{n}=X\times_{\be S} X^{n-1}$, where $n\geq 2$. By \cite[\S10.2, p.~97]{may}, $\{X^{n}\}_{\e n\e\geq\e 1}$ is a simplicial $S$-scheme with face maps
\begin{equation}\label{fa1}
\partial^{\,i}_{n}\,\colon\, X^{n+1}\to X^{n}, (x_{1},\dots,x_{n+1})\mapsto (x_{1},\dots,x_{i-1},x_{i+1},\dots, x_{n+1}),
\end{equation}
where $1\leq i\leq n+1$, i.e., $\partial^{\,i}_{n}$ is the canonical projection that omits the $i$-th factor. When $n=1$, we will use the following (standard) alternative notation for the maps \eqref{fa1}:
\begin{equation}\label{alt}
\partial^{\le 1}_{1}=p_{\le 2}\qquad\text{and}\qquad \partial^{\lle 2}_{1}=p_{\le 1},
\end{equation}
where $p_{\le r}\colon X^{2}\to X$ ($r=1,2$) is the projection onto the $r$-th factor.

Now assume that $S$ is locally noetherian, let $\s F$ be an abelian presheaf on $S_{\fl}$ and let $X\to S$ be a finite and faithfully flat morphism of schemes. Then we may consider the \v{C}ech cohomology groups of $\s F$ relative to the fppf covering $X\to S$, i.e., the cohomology groups $\check{H}^{\le i}(X\be/\be S, \s F\lle)$ of the complex of abelian groups $\big\{\s F\be\big(\be X^{n}\big), \partial^{\e n}\big\}_{\e n\e\geq\e 1}$, where
\[
\partial^{\, n}=\sum_{\e j=1}^{\e n+1}\e (-1)^{\le j+1}\lbe \s F\be\big(\partial_{ n}^{\, j}\big)\colon \s F\be\big(\be X^{ n}\big)\to \s F\be\big(\be X^{n+1}\big).
\]

If $\s F=\bg_{m,\e S}$ (respectively, $\s F=\mu_{\le n,\e S}$), the groups $\check{H}^{\le i}(X\be/\be S, \s F\lle)$ will be denoted by $\check{H}^{\le i}(X\be/\be S, \bg_{m})$ (respectively, $\check{H}^{\le i}(X\be/\be S, \mu_{\le n})$).

In terms of the standard notation \eqref{alt},
\begin{equation}\label{z}
\check{H}^{\le 0}(X\be/\lbe S, \s F\lle)=\krn[\s F\be\big(\le p_{\le 2}\lbe\big)\!\be-\be\!\s F\be\big(\le p_{\le 1}\lbe\big)\colon \s F(X)\to \s F(X^{2}\le)].
\end{equation}
Note that, since $f\lbe\circ\lbe p_{\le 2}=f\lbe\circ\lbe p_{\le 1}$, the map $\s F\lbe(\lle f\lle)\colon \s F(S\e)\to \s F(X)$ factors through $\check{H}^{\le 0}(X\be/\lbe S, \s F\lle)$, i.e.,
\begin{equation}\label{fact}
\s F\lbe(\lle f\lle)\colon \s F(S\e)\overset{\!\be\s F\lbe(\lle f\lle)^{\prime}}{\lra}\check{H}^{\le 0}(X\be/\lbe S, \s F\lle)\hookrightarrow \s F(X),
\end{equation}
where the second map is the inclusion.

The category of abelian presheaves on $S_{\fl}$ is abelian. Thus, given any integer $n\geq 1$, we may consider the presheaves $\s F_{\! n}$ and $\s F\be/\lbe n$ with values in the category of $n$-torsion abelian groups. Explicitly, $\s F_{\! n}$ and $\s F\be/\lbe n$ are defined as follows. If $T\to S$ is an object of $S_{\fl}$ and $h\colon T\to T^{\e\prime}$ is a morphism in $S_{\fl}$, then
\begin{eqnarray}
\s F_{\! n}(\lle T\lle)&=&\s F(\lle T\lle)_{n}\hskip .25cm\quad\text{and}\hskip .65cm\quad \s F_{\! n}(h)=\s F(h)_{n},\label{un}\\
(\s F\be/\lbe n)(\lle T\lle)&=&\s F(\lle T\lle)/n\quad\text{and}\quad (\s F\be/\lbe n)(h)=\s F(h)/n.\label{dois}
\end{eqnarray}
It follows from \eqref{kn}, \eqref{z} and \eqref{fact} that the following diagram exists and is commutative
\begin{equation}\label{mid}
\xymatrix{\s F_{\! n}(S\e)\ar@{=}[d]\ar[r]^(.4){\!\be\s F_{\lbe n}\lbe(\lle f\lle)^{\prime}}&\ar@{=}[d]\check{H}^{\le 0}(X\be/\lbe S, \s F_{\be n}\lle)\ar@{^{(}->}[r]& \s F_{\be n}(X)\ar@{=}[d]\\
\s F(S\e)_{n}\ar[r]^(.4){\!\be\s F\lbe(\lle f\lle)_{n}^{\prime}}&\check{H}^{\le 0}(X\be/\lbe S, \s F\lle)_{n}\ar@{^{(}->}[r]& \s F(X)_{n}.
}
\end{equation}
Thus
\begin{equation}\label{zn}
\s F_{\!n}\lbe(\lle f\lle)^{\prime}=\s F\lbe(\lle f\lle)^{\prime}_{n}.
\end{equation}
Note that, by the comment made right after \eqref{kn}, no analogous statements can be expected to hold for the presheaf $\s F\be/\lbe n$.

\smallskip

Next recall that, if $f\colon X\to S$ is endowed with a (right) action of a finite group $\Delta$, then $f$ is called a {\it Galois covering with Galois group $\Delta$} if the canonical map
\[
\coprod_{\e \delta\e\in\e \Delta}\be X\to X\!\times_{S}\!X, (x,\delta\e)\mapsto (x,x\delta\e),
\]
is an isomorphism of $S$-schemes. If $f$ is a Galois covering with Galois group $\Delta$ and $\s F$ transforms finite sums of schemes into direct products of abelian groups (e.g., is representable \cite[p.~231, line 7]{ega1}), then there exists a canonical isomorphism of abelian groups
\begin{equation}\label{gpc}
\check{H}^{\le i}(X\be/\be S, \s F\lle)\simeq H^{\le i}(\Delta, \s F(X)\lle)
\end{equation}
for every $i\geq 0$, where $\s F(X)$ is a left $\Delta$-module via the given right action of $\Delta$ on $X$ over $S$. See \cite[III, Example 2.6, p.~99]{mi1}.

\smallskip

\subsection{Restriction and corestriction maps} Let $f\colon S^{\e\prime}\to S$ be a morphism of schemes and let $X^{\prime}$ be an $\spp$-scheme. The {\it Weil restriction of $X^{\prime}$ along $f$} is the contravariant functor
$(\mathrm{Sch}/S)\to(\mathrm{Sets}), T\mapsto\hom_{
S^{\le\prime}}(T\times_{S}S^{\le\prime},X^{\le\prime}\le)$. The latter functor is representable if there exist an $S$-scheme $R_{S^{\le\prime}\be/S}(X^{\prime}\e)$ and a morphism of $S^{\e\prime}$-schemes $\theta_{X^{\prime},\e S^{\le\prime}\be/S}\colon R_{S^{\le\prime}\be/S}(X^{\prime}\e)_{S^{\le\prime}}\to X^{\prime}$ such that the map
\begin{equation}\label{wr}
\hom_{\le S}\e(T,R_{S^{\le\prime}\be/S}(X^{\le\prime}\e))\to\hom_{
S^{\le\prime}}(\e T\!\times_{S}\!S^{\e\prime},X^{\le\prime}\e), \quad g\mapsto \theta_{ X^{\prime}\!,\, S^{\le\prime}\be/S}\circ g_{\le S^{\le\prime}},
\end{equation}
is a bijection (functorially in $T\e$). See \cite[\S7.6]{blr} and \cite[Appendix A.5]{cgp} for basic information on the Weil restriction functor\,\footnote{\e As noted by Brian Conrad, the restriction in \cite[Appendix A.5]{cgp} to an affine base $S$ can be removed since all assertions in [loc.cit.] are local on $S$. Further, the noetherian hypotheses in [loc.cit.] are satisfied if $S$ and $\spp$ are locally noetherian.}\,. We will write
\begin{equation}\label{imor}
j_{ X,\e S^{\lle\prime}\be/ S}\colon X\to R_{\e S^{\lle\prime}\be/ S}(X_{\lbe S^{\lle\prime}}\lbe)
\end{equation}
for the canonical adjunction $S$-morphism, i.e., the $S$-morphism that corresponds to the identity morphism of $X_{S^{\lle\prime}}$ under the bijection \eqref{wr}.

Now assume that $S$ is locally noetherian and $f\colon S^{\e\prime}\to S$ is finite, faithfully flat and of constant rank $n\geq 1$ (in particular, $f$ is an fppf covering of $S\e$). Then $\spp$ is also locally noetherian and $j_{ S,\e S^{\lle\prime}\be/ S}$ is an isomorphism that will be regarded as an identification, i.e., we will write
\begin{equation}\label{cid}
R_{\e S^{\lle\prime}\be/ S}(\spp\e)=S.
\end{equation}
See \cite[Proposition A.5.7, p.~510]{cgp}. Now let $G$ be a commutative and quasi-projective $S$-group scheme. Then $j_{\le G,\e S^{\lle\prime}\be/ S}\colon G\to R_{\e S^{\lle\prime}\be/ S}(G_{\lbe S^{\lle\prime}}\lbe)$ \eqref{imor} is a closed immersion of commutative and quasi-projective $S$-group schemes. Further, if $G$ is smooth over $S$, then $R_{\e S^{\lle\prime}\be/ S}(G_{\lbe S^{\lle\prime}}\be)$ is smooth over $S$. See \cite[\S7.6, Theorem 4, p.~194]{blr}, \cite[II, Corollary 4.5.4]{ega} and \cite[Propositions A.5.2(4), A.5.7 and A.5.8, pp. p.~506-513]{cgp}.

\smallskip

For $r\geq 0$ and $\tau=\et$ or $\fl$, let $e_{\tau}^{\le (r)}\colon H^{\le r}\be(S_{\tau},R_{\e S^{\lle\prime}\be/ S}(G_{\lbe S^{\lle\prime}}\be)\e)\to H^{\le r}\be(\spp_{\tau},G\e)$ be the $r$-th edge morphism\,\footnote{ This is an instance of the first edge morphism in \cite[Proposition 2.3.1, p.~14]{t}.} induced by the Cartan-Leray spectral sequence associated to the pair $(G, f\le)$ relative to the $\tau$ topology:
\begin{equation}\label{leray}
H^{\le r}\be(S_{\tau},R^{\e s}\be f_{\lbe *}\lbe(G_{\be S^{\lle\prime}}\be))\implies H^{\e r+s}\lbe(\spp_{\tau},G\e).
\end{equation}
Note that, by \cite[Theorem 6.4.2(ii), p.~128]{t}, the maps $e_{\et}^{\lle r}$ are isomorphisms for every $r\geq 0$. However, the maps $e_{\fl}^{\lle r}$ are not isomorphisms in general \cite[XXIV, Remarks 8.5]{sga3}. We also note that, via the bijection \eqref{wr}, $e_{\tau}^{(0)}$ is the identity map: 
\begin{equation}\label{e0}
H^{\e 0}\lbe(S_{\tau},R_{\e S^{\lle\prime}\be/ S}(G_{\lbe S^{\lle\prime}}\be)\e)=R_{\e S^{\lle\prime}\be/ S}(G_{\lbe S^{\lle\prime}}\be)(S\e)=G_{\lbe S^{\lle\prime}}(\spp\e)=G(\spp\e)=H^{\e 0}\lbe(\spp_{\tau},G\e).
\end{equation}
The $r$-th restriction morphism associated to $(G,f\e)$ \eqref{ires}
factors as
\begin{equation}\label{jc}
j^{\le(r)}\colon H^{\e r}\lbe(S_{\tau},G\e)\overset{\!H^{r}\be(S_{\tau}\lbe,\e j\le)}{\lra} H^{\e r}\lbe(S_{\tau},R_{\e S^{\lle\prime}\be/ S}(G_{\lbe S^{\lle\prime}}\be)\e)\overset{\!e_{\tau}^{(r)}}{\lra} H^{\e r}\lbe(\spp_{\tau},G\e),
\end{equation}
where $j=j_{\e G,\e S^{\lle\prime}\be/ S}$.

\smallskip

Now, for every object $T\to S$ of $S_{\fl}$, set
\begin{equation}\label{abp}
\s H^{\e r}\be(G\e)(\le T\le)=H^{\le r}\lbe(\le T_{\fl},G\e).
\end{equation}
Further, if $h\colon T^{\e *}\to T$ is a morphism in $S_{\fl}$, set
\begin{equation}\label{abpm}
\s H^{\e r}\be(G\e)(h)=j_{\e G_{\le T},\e T^{\le *}\!/\e T,\e\fl}^{(r)}\colon H^{\le r}\lbe(\e T_{\fl},G\e)\to H^{\le r}\lbe(\le T^{\e *}_{\fl},G\e).
\end{equation}
Then \eqref{abp} and \eqref{abpm} define an abelian presheaf $\s H^{\e r}\be(G\e)$ on $S_{\fl}$ such that
\begin{equation}\label{cero}
\s H^{\le 0}\be(G\e)=G.
\end{equation}
By \eqref{fact}, $j_{\e G,\e\fl}^{(r)}=\s H^{\e r}\be(G\e)(\lle f\lle)$ factors as 
\begin{equation}\label{fr}
H^{\e r}\lbe(S_{\fl},G\e)\overset{ f_{G}^{(r)}}{\lra}\check{H}^{\le 0}\lbe(\spp\be/S,\s H^{\le r}\be(G\e)\lle)\hookrightarrow H^{\e r}\lbe(\spp_{\fl},G\e),
\end{equation}
where
\begin{equation}\label{frg}
f_{G}^{(r)}=\s H^{\e r}\be(G\e)(\lle f\lle)^{\prime}
\end{equation}
is the map in \eqref{fact} associated to the presheaf $\s F=\s H^{\e r}\be(G\e)$. Thus
\begin{equation}\label{kj}
\krn\e j_{\e G,\e\fl}^{(r)}=\krn[\, f_{G}^{(r)}\colon H^{\le r}\be(S_{\fl},G\e)\to \check{H}^{\le 0}\lbe(\spp\be/S,\s H^{\e r}\be(G\e)\lle)].
\end{equation}
The maps $f_{G}^{(r)}$ \eqref{frg} are the second edge morphisms in \cite[Proposition 2.3.1, p.~14]{t} associated to the spectral sequence for \v{C}ech cohomology \cite[Theorem 3.4.4(i), p.~58]{t}:
\begin{equation}\label{chec}
\check{H}^{s}(\spp\!/\lbe S,\s H^{\e r}\be(G\e)\lle)\implies H^{\e r+s}(S_{\fl},G\e).
\end{equation}
By \cite[p.~309, line 8]{mi1} and \eqref{kj} for $r=2$, the preceding spectral sequence yields an exact sequence of abelian groups  
\begin{equation}\label{lchec}
\begin{array}{rcl}
0&\to& \check{H}^{\le 1}(\spp\!/\be S, G\e)\overset{e^{\lle 1}_{\lle G}}{\lra} H^{\e 1}\lbe(S_{\fl},G\e)\overset{f_{G}^{(1)}}{\lra} \check{H}^{\le 0}(\spp\be/\lbe S, \s H^{\le 1}\lbe(G\e)\lle)\\
&\overset{d_{2}^{\e 0,1}}{\lra}& \check{H}^{\le 2}(\spp\!/\be S, G\e)\to\krn j_{\le G,\le\fl}^{(2)}\to \check{H}^{1}(\spp\!/S,\s H^{\le 1}\be(G\e)\lle)\to \check{H}^{3}(\spp\!/S,G\e),
\end{array}
\end{equation}
where $e^{\le 1}_{\le G}$ is an instance of the first edge morphism in \cite[Proposition 2.3.1, p.~14]{t}
and $d_{2}^{\, 0,1}$ is the first transgression map \cite[p.~49]{sha} induced by the spectral sequence \eqref{chec}. In particular, the exactness of the above sequence together with \eqref{kj} for $r=1$ implies the following well-known fact\,\footnote{\e The standard proof of this fact can be found, for example, in \cite[III, proof of Proposition 4.6, p.~123]{mi1}.}\e:

\begin{proposition} \label{ch} There exists a canonical isomorphism of abelian groups
\[
\krn\e j_{\le G, \e \fl}^{\le(1)}\isoto\check{H}^{\le 1}(\spp\!/\lbe S, G\e).
\]
\end{proposition}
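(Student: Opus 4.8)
The plan is to extract the desired isomorphism directly from the long exact sequence \eqref{lchec}, which the excerpt has already assembled from the \v{C}ech-to-derived-functor spectral sequence \eqref{chec}. Recall from \eqref{kj} (applied with $r=1$) that $\krn j_{\le G,\le\fl}^{(1)}$ equals the kernel of the edge map $f_{G}^{(1)}\colon H^{\le 1}(S_{\fl},G)\to\check{H}^{\le 0}(\spp\!/S,\s H^{\le 1}(G))$. So the statement to be proved is precisely that the first edge morphism $e^{\le 1}_{\le G}\colon\check{H}^{\le 1}(\spp\!/S,G)\to H^{\le 1}(S_{\fl},G)$ has image equal to $\krn f_{G}^{(1)}$ and is injective.

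First I would invoke the initial portion of the exact sequence \eqref{lchec}, namely
\[
0\to\check{H}^{\le 1}(\spp\!/\be S,G)\overset{e^{\le 1}_{\le G}}{\lra}H^{\le 1}(S_{\fl},G)\overset{f_{G}^{(1)}}{\lra}\check{H}^{\le 0}(\spp\!/\be S,\s H^{\le 1}(G)),
\]
whose exactness is guaranteed by the cited edge/transgression formalism of \cite[Proposition 2.3.1, p.~14]{t} together with \cite[p.~309, line 8]{mi1}. Exactness at the left term says $e^{\le 1}_{\le G}$ is injective; exactness at the middle term says $\img e^{\le 1}_{\le G}=\krn f_{G}^{(1)}$. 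Hence $e^{\le 1}_{\le G}$ restricts to an isomorphism of $\check{H}^{\le 1}(\spp\!/\be S,G)$ onto $\krn f_{G}^{(1)}$.

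Then I would close the argument by combining this with \eqref{kj} for $r=1$: since $\krn j_{\le G,\e\fl}^{(1)}=\krn f_{G}^{(1)}$ as subgroups of $H^{\le 1}(S_{\fl},G)$, the map $(e^{\le 1}_{\le G})^{-1}$ furnishes the canonical isomorphism $\krn j_{\le G,\e\fl}^{(1)}\isoto\check{H}^{\le 1}(\spp\!/\be S,G)$. Naturality (``canonical'') is inherited from the functoriality of the spectral sequence \eqref{chec} and of its edge morphisms in the pair $(G,f)$, so no extra work is needed there.

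Honestly there is no real obstacle here: the statement is a formal corollary of \eqref{lchec}, and the excerpt essentially flags it as ``the following well-known fact.'' The only point requiring a sentence of care is distinguishing the two edge morphisms attached to \eqref{chec} — the first edge morphism $e^{\le 1}_{\le G}$ landing in $H^{\le 1}(S_{\fl},G)$ and the second edge morphism $f_{G}^{(1)}$ emanating from it — and checking that the kernel appearing in \eqref{kj} is the same subgroup as the one cut out by exactness at the middle of \eqref{lchec}; but both are literally defined to be $\krn f_{G}^{(1)}$, so this is immediate. For completeness one may also remark that for the classical case $G=\bg_{m}$ this recovers the isomorphism $\krn j^{(1)}\simeq\check{H}^{1}(\spp\!/S,\bg_{m})$ used in the Introduction.
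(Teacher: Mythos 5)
Your proposal is correct and follows exactly the paper's own route: the paper derives Proposition \ref{ch} as an immediate consequence of the exactness of the first three terms of \eqref{lchec} together with the identification $\krn j_{\le G,\e\fl}^{(1)}=\krn f_{G}^{(1)}$ from \eqref{kj} for $r=1$. Nothing is missing.
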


\smallskip

Now let 
\begin{equation}\label{tmor}
N_{G,\e S^{\lle\prime}\be/ S}\colon R_{\e S^{\lle\prime}\be/ S}(G_{\lbe S^{\lle\prime}}\be)\to G
\end{equation}
be the norm (or trace) morphism  defined in \cite[XVII, 6.3.13.1 \& 6.3.14(a)]{sga4}. By \cite[XVII, Proposition 6.3.15(iv)]{sga4}, the composition
\begin{equation}\label{nis}
N_{G,\e S^{\lle\prime}\be/ S}\circ j_{G,\e S^{\lle\prime}\be/ S}=n_{\le G}\colon G\e\overset{j_{G,\e S^{\lle\prime}\be/ S}}{\lra} R_{\e S^{\lle\prime}\be/ S}(G_{\lbe S^{\lle\prime}}\be)\overset{\!N_{G,\e S^{\lle\prime}\be/ S}}{\lra} G
\end{equation}
is the $n$-th power morphism on $G$. The {\it $r$-th corestriction map associated to $G$ and $f\colon \spp\to S$}, denoted by 
\begin{equation}\label{nrm}
N^{(\e r\le)}\colon H^{\e r}\lbe(\spp_{\et},G\e)\to H^{\e r}\lbe(S_{\et},G\e),
\end{equation}
is the composition
\[
H^{\e r}\lbe(\spp_{\et},G\e)\underset{\!\sim}{\overset{\!(e_{\et}^{\le r})^{-1}}{\lra}}H^{\e r}\lbe(S_{\et},R_{\e S^{\lle\prime}\be/ S}(G_{\lbe S^{\lle\prime}}\be)\e)\overset{\!H^{\lle r}\be(S_{\et}\lbe,\e N\lle)}{\lra}H^{\e r}\lbe(S_{\et},G\e),
\]
where $N=N_{G,\e S^{\lle\prime}\be/ S}$.

Now, by the factorization \eqref{jc}, $N^{\le(\le r\le)}\circ j_{\le G,\e\et}^{\le(r)}=H^{\le r}\lbe(S_{\et}\lbe,\e N\le)\circ H^{\le r}\lbe(S_{\et}\lbe,\e j\le)$. Thus, by \eqref{nis}, the composition
\begin{equation}\label{nce}
H^{\le r}\lbe(S_{\et},G\e)\overset{\!j_{ G,\e\et}^{\le(r)}}{\lra} H^{\le r}\lbe(\spp_{\et},G\e)\overset{\!N^{\le(\le r\le)}}{\lra}  H^{\le r}\lbe(S_{\et},G\e)
\end{equation}
is the multiplication by $n$ map on $H^{\le r}\lbe(S_{\et},G\e)$. 
Consequently, $\krn\e j_{\le G,\e\et}^{\le(r)}\subseteq H^{\le r}\lbe(S_{\et},G\e)_{n}$, whence \eqref{sm} and \eqref{kj} yield the following statement.

\begin{lemma} \label{uu} If $G$ is smooth over $S$, then
\[
\krn\e j_{\le G}^{\le(r)}=\krn\e[\e H^{\le r}\lbe(S_{\et},G\e)_{n}\overset{\!\be \big(\le f_{\lbe G}^{\le(r)}\lbe\big)_{\!\be n}}{\lra} \check{H}^{\le 0}\lbe(\spp\be/S,\s H^{\le r}\be(G\e)\lle)_{n}\e],
\]
where $f_{G}^{(r)}=\s H^{\e r}\be(G\e)(\lle f\lle)^{\prime}$ is the map in \eqref{fr}.
\end{lemma}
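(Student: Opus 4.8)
The plan is to combine the factorization \eqref{fr} with the fact, established in \eqref{nce}, that the composition $N^{(r)}\circ j^{(r)}_{G,\et}$ is multiplication by $n$ on $H^{\e r}\lbe(S_{\et},G\e)$. First I would observe that since $G$ is smooth over $S$, \eqref{sm} lets us write $j_{\le G}^{(r)}=j_{\le G,\e\fl}^{(r)}$, so by \eqref{kj} we have $\krn j_{\le G}^{(r)}=\krn f_{G}^{(r)}$, where $f_{G}^{(r)}\colon H^{\e r}\lbe(S_{\fl},G\e)\to\check{H}^{\le 0}\lbe(\spp\be/S,\s H^{\le r}\be(G\e)\lle)$ is the second edge morphism from \eqref{fr}. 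Thus the only thing to prove is that $\krn f_{G}^{(r)}$ equals the kernel of the induced map $\big(f_{G}^{(r)}\big)_{n}$ on $n$-torsion subobjects, which by \eqref{kn} amounts to showing that $\krn f_{G}^{(r)}$ is contained in $H^{\le r}\lbe(S_{\et},G\e)_{n}$.

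For the inclusion $\krn f_{G}^{(r)}\subseteq H^{\le r}\lbe(S_{\et},G\e)_{n}$, I would argue as follows. The discussion culminating in \eqref{nce} shows that $N^{(r)}\circ j^{(r)}_{\le G,\et}$ is the multiplication-by-$n$ endomorphism of $H^{\le r}\lbe(S_{\et},G\e)$; in particular $\krn j^{(r)}_{\le G,\et}\subseteq H^{\le r}\lbe(S_{\et},G\e)_{n}$, as already noted in the sentence preceding Lemma \ref{uu}. Since $G$ is smooth, \eqref{sm} identifies $j^{(r)}_{\le G,\et}$ with $j^{(r)}_{\le G,\fl}$, and \eqref{kj} identifies $\krn j^{(r)}_{\le G,\fl}$ with $\krn f_{G}^{(r)}$; hence $\krn f_{G}^{(r)}\subseteq H^{\le r}\lbe(S_{\et},G\e)_{n}$.

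With this containment in hand the conclusion is immediate. Restricting $f_{G}^{(r)}$ to the $n$-torsion subgroup gives the map $\big(f_{G}^{(r)}\big)_{n}\colon H^{\le r}\lbe(S_{\et},G\e)_{n}\to\check{H}^{\le 0}\lbe(\spp\be/S,\s H^{\le r}\be(G\e)\lle)_{n}$ (here one uses that $f_{G}^{(r)}$ sends $n$-torsion to $n$-torsion, which is automatic, and the target's $n$-torsion subobject is identified via \eqref{mid}/\eqref{zn} applied to the presheaf $\s F=\s H^{\le r}\be(G\e)$). By \eqref{kn}, $\krn\big(f_{G}^{(r)}\big)_{n}=\big(\krn f_{G}^{(r)}\big)_{n}$, and since $\krn f_{G}^{(r)}$ is already $n$-torsion by the previous paragraph, $\big(\krn f_{G}^{(r)}\big)_{n}=\krn f_{G}^{(r)}$. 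Combining, $\krn j_{\le G}^{(r)}=\krn f_{G}^{(r)}=\krn\big(f_{G}^{(r)}\big)_{n}$, which is the asserted formula.

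I do not anticipate a serious obstacle: the statement is essentially a bookkeeping consequence of results already assembled in this section. The one point requiring a little care is the identification of the target group $\check{H}^{\le 0}\lbe(\spp\be/S,\s H^{\le r}\be(G\e)\lle)_{n}$ appearing in the displayed formula with the codomain of $\big(f_{G}^{(r)}\big)_{n}$; this is handled by specializing the commutative diagram \eqref{mid} and the identity \eqref{zn} to the presheaf $\s F=\s H^{\le r}\be(G\e)$, noting that $\big(f_{G}^{(r)}\big)_{n}=\big(\s H^{\le r}\be(G\e)(f)^{\prime}\big)_{n}=\s H^{\le r}\be(G\e)_{n}(f)^{\prime}$ by \eqref{zn}.
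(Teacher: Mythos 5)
Your proof is correct and takes essentially the same approach as the paper: the paper likewise deduces $\krn j^{(r)}_{G,\e\et}\subseteq H^{\e r}(S_{\et},G\e)_{n}$ from the norm identity \eqref{nce}, then invokes \eqref{sm} and \eqref{kj} to identify this kernel with $\krn f_{G}^{(r)}$, and finally uses \eqref{kn} (together with the identification \eqref{mid}/\eqref{zn} of the target) to conclude that it equals $\krn\big(f_{G}^{(r)}\big)_{n}$.
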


\smallskip

\section{The capitulation kernel}\label{scap}

In this section we establish the main theorem of the paper (Theorem \ref{big}) which applies to the following types of pairs $(G,f\e)$:

\begin{definition}\label{adm} The pair $(G,f\e)$ is called {\it admissible} if \begin{enumerate}
\item[(i)] $S$ is locally noetherian,
\item[(ii)] $f\colon \spp\to S$ is finite, faithfully flat and of constant rank $n\geq 2$,
\item[(iii)] $G$ is smooth, commutative, quasi-projective, of finite presentation over $S$ and its fibers are connected, and
\item[(iv)] for every point $s\in S$ such that ${\rm char}\, k(\lbe s\lbe)$ divides $n$, $G_{\be\lle s}$ is a semiabelian variety over $k(\lbe s\lbe)$.
\end{enumerate}
\end{definition}

\begin{examples}\label{exs} The following are examples of admissible pairs:
\begin{enumerate}
\item[(a)] $f\colon \spp\to S$ is a finite and faithfully flat morphism of connected noetherian schemes and $G$ is a semiabelian $S$-scheme. For example,  $G=\bg_{m,\e S}$.
\item[(b)]  $f\colon \spp\to S$ is a finite and faithfully flat morphism of rank $n\geq 2$ between connected Dedekind schemes, $F$ is the function field of $S$, $G_{\lbe F}$ is a smooth, commutative and connected $F$-group scheme of finite type,  $G_{\lbe F}$ has a N\'eron $S$-model $\s N$ with semiabelian reduction at all $s\in S$ such that ${\rm char}\, k(\be s\be)$ divides $n$ and $G=\s N^{\e 0}$ is the identity component of $\s N$. That $G$ satisfies condition (iii) of Definition \ref{adm} follows from \cite[\S6.4, Theorem 1, p.~153, and \S10.1, p.~290, line 6]{blr}.
\end{enumerate}
\end{examples}

\smallskip

\begin{proposition}\label{isg} Let $n\geq 1$ be an integer, let $S$ be a locally noetherian scheme and let $G$ be a smooth and commutative $S$-group scheme with connected fibers. Assume that, for every point $s\in S$ such that ${\rm char}\, k(\lbe s\lbe)$ divides $n$, $G_{\lbe s}$ is a semiabelian $k(\lbe s\lbe)$-variety. Then $n_{\le G}\colon G\to G$ is faithfully flat and locally of finite presentation.
\end{proposition}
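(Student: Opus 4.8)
The plan is to reduce the statement to a fiberwise criterion for flatness and then to handle the two types of fibers---those of characteristic prime to $n$ and those of characteristic dividing $n$---separately. Since $G$ is smooth over $S$, it is in particular flat and locally of finite presentation over $S$; the morphism $n_{\le G}$ is a morphism of $S$-schemes between schemes flat and locally of finite presentation over $S$, so by the fiberwise flatness criterion (EGA IV, 11.3.10) it suffices to show that for every $s\in S$ the fiber map $n_{\le G_{\le s}}\colon G_{\le s}\to G_{\le s}$ is flat and that $n_{\le G}$ is locally of finite presentation. The latter is immediate: $n_{\le G}$ is a morphism between schemes locally of finite presentation over $S$, hence locally of finite presentation. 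So everything comes down to showing that the $n$-th power map on each fiber $G_{\le s}$, a smooth connected commutative group variety over the field $k(s)$, is faithfully flat.

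For the flatness of $n_{\le G_{\le s}}$, I would invoke the standard fact that a homomorphism of group schemes of finite type over a field which is \emph{surjective} (equivalently, faithfully flat) can be detected by a dimension/surjectivity count, together with the structure of $G_{\le s}$. First I would treat the case ${\rm char}\,k(s)\nmid n$: here $n_{\le G_{\le s}}$ is \'etale at the identity (its differential is multiplication by $n$ on the Lie algebra, an isomorphism), hence \'etale everywhere by translation, hence flat; and it is surjective because over $\overline{k(s)}$ a smooth connected commutative group variety is divisible (this is classical: abelian varieties are divisible, tori are divisible, $\G_a$ is divisible in characteristic $0$, and a connected commutative group variety is an extension of an abelian variety by a linear group, itself an extension of a torus by a unipotent group). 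In the case ${\rm char}\,k(s)=p\mid n$, by hypothesis $G_{\le s}$ is a semiabelian variety, i.e.\ an extension $1\to T\to G_{\le s}\to A\to 1$ with $T$ a torus and $A$ an abelian variety. Multiplication by $n$ is surjective on $T$ (a torus over any field is divisible, since over $\overline{k(s)}$ it is a product of $\G_m$'s and $\G_m$ is divisible even in characteristic $p$) and on $A$ (abelian varieties are divisible over any field), hence, by the snake lemma applied in the category of fppf sheaves, $n_{\le G_{\le s}}$ is an epimorphism of fppf sheaves, i.e.\ faithfully flat. Flatness of $n_{\le G_{\le s}}$ then follows from the fact that a surjective homomorphism between smooth group schemes of finite type over a field is automatically faithfully flat (e.g.\ SGA 3, or: the fibers of such a homomorphism are all translates of a single closed subgroup scheme, hence have constant dimension, and flatness over a reduced---indeed regular---base follows from constancy of fiber dimension via the miracle flatness theorem, since the source and target are regular).

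Having established fiberwise flatness and finite presentation, the fiberwise flatness criterion gives that $n_{\le G}$ is flat and locally of finite presentation over $S$; it is faithfully flat because it is flat and surjective---surjectivity being checked fiberwise, which is exactly what we proved. The step I expect to be the main obstacle is the characteristic-$p$-dividing-$n$ case: in positive characteristic $n_{\le G_{\le s}}$ is typically \emph{not} \'etale (it has a nontrivial infinitesimal kernel, e.g.\ $\mu_{\le p}$ inside $\G_m$), so one genuinely needs the semiabelian hypothesis to guarantee divisibility and to avoid the pathologies of unipotent groups (where $n_{\le G}$ with $p\mid n$ can fail to be flat---witness $\G_a$ in characteristic $p$, where multiplication by $p$ is the zero map). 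Marshalling the exactness of $0\to T\to G_{\le s}\to A\to 1$ through the multiplication-by-$n$ maps, and concluding faithful flatness from epimorphism-of-sheaves plus regularity of the schemes involved, is the delicate part; the characteristic-zero-or-prime-to-$n$ case is comparatively routine via the \'etale-at-identity argument.
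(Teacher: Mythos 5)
Your proposal is correct and follows essentially the same route as the paper: reduce to the fibers via the fiberwise flatness criterion of EGA IV, treat the prime-to-characteristic case by \'etaleness of $n_{\le G_s}$, and in the case ${\rm char}\,k(s)\mid n$ use the semiabelian dévissage $1\to T\to G_s\to A\to 1$ together with divisibility of tori and abelian varieties to get surjectivity, whence flatness of a surjective homomorphism of smooth group schemes over a field. The only (harmless) differences are bookkeeping --- the paper argues surjectivity on $\overline{F}$-points and cites Demazure--Gabriel and SGA\,3 where you invoke fppf-sheaf epimorphisms and miracle flatness --- and you should phrase the divisibility claim in the prime-to-characteristic case as $n$-divisibility only, since full divisibility fails for unipotent groups in characteristic $p$.
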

\begin{proof} By \cite[(6.2.1.2) and Proposition 6.2.3(v), p.~298]{ega1}, $n_{\le G}$ is locally of finite presentation. Now, by \cite[${\rm IV_{3}}$, Corollary 11.3.11]{ega}, to establish the flatness and surjectivity of $n_{\le G}$ we may assume that $S=\spec F$, where $F$ is a field. If $n$ is prime to ${\rm char}\, F$, then $n_{\le G}$ is \'etale and therefore flat \cite[${\rm VII_{A}}$, \S8.4, Proposition]{sga3}. Thus, since $G$ is connected, $n_{\le  G}$ is surjective by \cite[${\rm VI_{B}}$, Proposition 3.11 and its proof]{sga3}. Assume now that ${\rm char}\, F$ divides $n$ and let $\bar{F}$ be an algebraic closure of $F$. Since the $n$-th power morphism $\bar{F}^{\e *}\to \bar{F}^{\e *}$ is surjective, $n_{\le G}\lbe(\bar{F}\lle)$ is surjective if $G$ is an $F$-torus. If $G$ is an abelian variety over $F$, then $n_{\le G}\lbe(\bar{F}\lle)$ is also surjective by \cite[p.~62]{mum}. Now let $G$ be a semiabelian variety over $F$, i.e., an extension $1\to T\to G\to A\to 1$, where $T$ is an $F$-torus and $A$ is an abelian variety over $F$. Then the following exact and commutative diagram of abelian groups shows that the map $n_{\lle G}\lbe(\bar{F}\lle)\colon G(\le\bar{F}\e)\to G(\le\bar{F}\e)$ is surjective as well:
\[
\xymatrix{
0\ar[r]& T(\le\bar{F}\e)\ar@{->>}[d]^(.45){n_{\lle T}\lbe(\bar{F}\lle)}\ar[r]& G(\le\bar{F}\e)\ar[r]\ar[d]^(.45){n_{\lle G}\lbe(\bar{F}\lle)}& A(\le\bar{F}\e)\ar@{->>}[d]^(.45){n_{\lbe A}\lbe(\bar{F}\lle)}\ar[r]& 0\\
0\ar[r]&T(\le\bar{F}\e)\ar[r]& G(\le\bar{F}\e)\ar[r]& A(\le\bar{F}\e)\ar[r]& 0.
}
\]
We conclude from \cite[I, \S3, Corollary 6.10, p.~96]{dg} that $n_{\le G}$ is surjective. Thus, since $G$ is smooth, $n_{\le G}$ is flat by \cite[${\rm VI_{B}}$, Proposition 3.11]{sga3}.
\end{proof}

Let $f\colon \spp\to S$ be a morphism of schemes and let $G$ be an $S$-group scheme such that the pair $(G,f\e)$ is admissible (see Definition \ref{adm}). Let $n\geq 2$ be the rank of $f$. By Proposition \ref{isg},
\begin{equation}\label{bas}
0\to G_{\lbe n}\to G\overset{\!n}{\to}G\to 0
\end{equation}
is an exact sequence in $\sfs$. For every object $T\to S$ of $S_{\fl}$ and every integer $r\geq 1$, \eqref{bas} induces an exact sequence of abelian groups
\begin{equation}\label{ab}
0\to H^{\le r-1}\lbe(T_{\et},G\e)/n\to H^{\le r}(T_{\fl}, G_{\lbe n}\le)\to  H^{\le r}\lbe(T_{\et},G\e)_{n}\to 0,
\end{equation}
where the left-hand nontrivial map above is induced by the connecting morphism $H^{\le r-1}\lbe(T_{\fl},G\e)\simeq H^{\le r-1}\lbe(T_{\et},G\e)\to H^{\le r}(T_{\fl}, G_{\lbe n}\le)$ in fppf cohomology induced by the sequence \eqref{bas}.
Thus there exists a canonical exact sequence of abelian presheaves on $\sfs$
\begin{equation}\label{abs}
0\to \s H^{\e r-1}\lbe(G\e)/n\to \s H^{\e r}\be(G_{n})\to \s H^{\e r}\be(G\e)_{n}\to 0,
\end{equation}
where the left-hand term is the presheaf \eqref{dois} associated to  
$\s H^{\e r-1}\lbe(G\e)$, the middle term is the presheaf \eqref{abp} associated to $G_{\lbe n}$ and the right-hand term is the presheaf \eqref{un} associated to $\s H^{\e r}\lbe(G\e)$. By \eqref{fact}, \eqref{zn} and \eqref{frg}, the maps $(\s H^{\le r-1}\lbe(G\e)/n)(\lle f\le)$ and $\s H^{\e r}\lbe(G\e)_{n}(\lle f\le)$ factor as
\begin{equation}\label{map1}
H^{\le r-1}\lbe(\le S_{\et},G\e)/n\overset{\le f^{(r-1)}_{/n}}{\lra}\check{H}^{\le 0}(\spp\!/\lbe S,\s H^{\e r-1}\be(G\e)/n\lle)\hookrightarrow H^{\e r-1}\lbe(\le \spp_{\et},G\e)/n
\end{equation}
and 
\begin{equation}\label{map2}
H^{\le r}\lbe(\le S_{\et},G\e)_{n}\overset{\!\be\big(f^{(r)}_{\lbe G}\be\big)_{\!\be n}}{\lra}\check{H}^{\le 0}(\spp\!/\lbe S,\s H^{\le r}\be(G\e))_{n}\hookrightarrow H^{\le r}\lbe(\le \spp_{\et},G\e)_{n},
\end{equation}
respectively, where $f^{(r-1)}_{/n}=(\s H^{\e r-1}\lbe(G\e)/n)(\lle f\lle)^{\prime}\,$\footnote{\e Note that, in general, $f^{(r-1)}_{/n}\neq f_{G}^{(r-1)}\be/n$, by the comment following \eqref{zn}.}\,.

\smallskip

Now set $\sppp=\spp\times_{S}\spp$ and, for $i=1$ or $2$, let $p_{\le i}\colon \sppp\to\spp$ be the projection onto the $i$-th factor. We will write $p_{\lle i}^{\lle *}=G(\e p_{\lle i})\colon G(\spp\le)\to G(\sppp\le)$ for the morphism of abelian groups induced by $p_{\le i}$. We now define
\begin{equation}\label{bgn}
\Psi(G,f\e)=\{\e x\in G(\spp\le)\colon p_{\lle 1}^{\lle *}(x)\le p_{\lle 2}^{\lle *}(x)^{-1}\be\in\lbe G(\sppp\le)^{n}\e\}.
\end{equation}

When $G=\bg_{m,\le S}$, \eqref{bgn} will be written 
\begin{equation}\label{ugn}
\Psi(U,f\e)=\{\e x\in U(\spp\le)\colon p_{\lle 1}^{\lle *}(x)\le p_{\lle 2}^{\lle *}(x)^{-1}\be\in\lbe U(\sppp\le)^{n}\e\},
\end{equation}
where $U$ is the global units functor \eqref{gu}.

Since $f\circ p_{\le 1}=f\circ p_{\le 2}$, the image of $G(\lle f\lle)\colon G(S\e)\hookrightarrow G(\spp\le)$ is contained in $\Psi(G,f\e)$. Thus \eqref{bgn} is a subgroup of $G(\spp\le)$ containing $G(S\le)\le G(\spp\le)^{n}$. By \eqref{z}, \eqref{dois} and \eqref{cero}, we have
$\check{H}^{\le 0}(\spp\be/\lbe S,\s H^{\e 0}\be(G\e)/n\lle)=\Psi(G,f\e)/G(\spp)^{n}$ and
$f^{(0)}_{/n}\colon G(S\le)/n\to \check{H}^{\le 0}(\spp\be/\lbe S,\s H^{\e 0}\be(G\e)/n\lle)$ is the map $G(S\le)/n\to \Psi(G,f\e)/G(\spp)^{n}$ induced by $G(\lle f\lle)\colon G(S\e)\hookrightarrow G(\spp)$. Thus there exist canonical isomorphisms of abelian groups
\begin{equation}\label{pri}
\krn f^{(0)}_{/n}\simeq\frac{G(S\e)\cap G(\spp)^{n}}{G(S\e)^{n}}
\end{equation}
and
\begin{equation}\label{seg}
\cok f^{(0)}_{/n}\simeq \frac{\Psi(G,f\e)}{G(S\e)G(\spp)^{n}},
\end{equation}
where the intersection takes place inside $G(\spp)$.

We now consider the exact and commutative diagram of abelian groups
\begin{equation}\label{diag}
\xymatrix{0\ar[r]& H^{\le r-1}\lbe(S_{\et},G\e)/n\ar[r]\ar[d]_{f^{(r-1)}_{/n}} & H^{\le r}(S_{\fl}, G_{\lbe n}\le)\ar[d]_{f_{G_{n}}^{(r)}}\ar@{->>}[r]& H^{\le r}(S_{\et}, G\e)_{n}\ar[d]\ar[d]_{\big(f^{(r)}_{\lbe G}\be\big)_{\! n}}\\
0\ar[r]&\check{H}^{\le 0}(\spp\!/\lbe S,\s H^{\le r-1}\be(G\e)/n\lle)\ar[r]& \check{H}^{\le 0}(\spp\!/\lbe S,\s H^{\e r}\be(G_{n})\lle)\ar[r]&\check{H}^{\le 0}(\spp\!/\lbe S,\s H^{\e r}(G\e)\lle)_{n}&&
}
\end{equation}
where the top row is the sequence \eqref{ab} for $T=S$, the bottom row is the beginning of the \v{C}ech cohomology sequence induced by \eqref{abs} \cite[p.~97, line $-4$]{mi1} and the vertical maps are given, respectively, by \eqref{map1}, \eqref{fr} (for $G_{n}$) and \eqref{map2} via the equality $\check{H}^{\le 0}(\spp\!/\lbe S,\s H^{\e r}(G\e)_{n}\lle)=\check{H}^{\le 0}(\spp\!/\lbe S,\s H^{\e r}(G\e)\lle)_{n}$ \eqref{mid}. Using \eqref{kj} for $G_{n}$ and Lemma \ref{uu}\e, the above diagram yields an exact sequence of abelian groups
\begin{equation}\label{si1}
0\to \krn f^{(r-1)}_{/n}\to\krn j_{G_{n},\le\fl}^{\le(r)}\to \krn\e j_{\le G}^{\le(r)}\to \cok f^{(r-1)}_{/n}\to \cok f_{G_{n}}^{(r)}.
\end{equation}
We now observe that \eqref{lchec} (applied to $G_{n}$) induces an injection $\cok f_{G_{ n}}^{(1)}\hookrightarrow \check{H}^{\le 2}(\spp\!/\be S, G_{\lbe n}\lbe)$. Thus  Proposition \ref{ch}, \eqref{pri}, \eqref{seg} and the exactness of \eqref{si1} for $r=1$ yield the main result of this paper:

\begin{theorem}\label{big} Assume that the pair $(G,f\le)$ is admissible (see Definition {\rm \ref{adm}}) and let $n\geq 2$ be the rank of $f$. Then there exists a canonical exact sequence of $n$-torsion abelian groups
\[
\begin{array}{rcl}
0&\to& G(S\e)\be\cap\be G(\spp\le)^{n}\be/G(S\e)^{n}\to \check{H}^{\le 1}(\spp\!/\be S, G_{\lbe n}\lbe)\to \krn\e j_{\le G}^{\le(1)}\\
&\to& \Psi(G,f\e)/G(S\e)\le G(\spp\le)^{n}\to \check{H}^{\le 2}(\spp\!/\be S, G_{\lbe n}\lbe),
\end{array}
\]
where $j_{\le G}^{\le(1)}\colon H^{\le 1}(S_{\et}, G\e)\to H^{\le 1}(\spp_{\et}, G\e)$ is the first restriction map associated to $(G,f\le)$ \eqref{cap1} and $\Psi(G,f\le)$ is the subgroup \eqref{bgn} of $G(\spp\le)$. If, in addition, $f$ is a Galois covering with Galois group $\Delta$ and the groups $\check{H}^{\le i}(\spp\!/\be S, G_{\lbe n}\lbe)$ above are replaced with $H^{\le i}\lbe(\Delta, G(\spp\le)_{n}\lbe)$ via \eqref{gpc}, where $i=1$ and $2$, then the resulting sequence is also exact.
\end{theorem}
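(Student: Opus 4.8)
The plan is to assemble the exact sequence directly from the pieces already in place, namely the exactness of \eqref{si1} for $r=1$, together with the identifications \eqref{pri}, \eqref{seg}, Proposition \ref{ch}, and the injection $\cok f_{G_n}^{(1)}\hookrightarrow \check H^{\le 2}(\spp\!/S,G_n)$ coming from \eqref{lchec}. Concretely, take the five-term exact sequence
\[
0\to \krn f^{(0)}_{/n}\to\krn j_{G_{n},\le\fl}^{\le(1)}\to \krn\e j_{\le G}^{\le(1)}\to \cok f^{(0)}_{/n}\to \cok f_{G_{n}}^{(1)}
\]
and substitute: $\krn f^{(0)}_{/n}\simeq G(S)\cap G(\spp)^n/G(S)^n$ by \eqref{pri}; $\krn j_{G_n,\fl}^{(1)}\isoto\check H^{\le 1}(\spp\!/S,G_n)$ by Proposition \ref{ch} applied to the group scheme $G_n$ (which is legitimate since $G_n$ is representable by a quasi-projective $S$-group scheme and $S$ is locally noetherian, so the relevant \v Cech machinery applies); $\cok f^{(0)}_{/n}\simeq \Psi(G,f)/G(S)G(\spp)^n$ by \eqref{seg}; and finally compose the last arrow with the injection into $\check H^{\le 2}(\spp\!/S,G_n)$. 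This yields the displayed five-term sequence, and since every group involved is killed by $n$ (the outer kernels by \eqref{kn}, the \v Cech groups because they are built from the $n$-torsion presheaf $G_n$, and $\krn j_G^{(1)}$ by the norm argument around \eqref{nce}), it is a sequence of $n$-torsion abelian groups. I would present this substitution step carefully, checking that the maps respect the identifications — the only subtlety is that \eqref{mid} is needed to make sense of the rightmost column of \eqref{diag}, but that is already recorded.

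For the second assertion, the point is simply that when $f$ is a Galois covering with Galois group $\Delta$, the group-cohomology comparison \eqref{gpc} applies to the presheaf $\s F=G_n$ (equivalently, to the $S$-group scheme $G_n$, which is representable and hence transforms finite sums into products), giving canonical isomorphisms $\check H^{\le i}(\spp\!/S,G_n)\simeq H^{\le i}(\Delta,G_n(\spp))$ for $i=1,2$. Moreover $G_n(\spp)=G(\spp)_n$ by the very definition of the group scheme $G_n$ as a fibre product along the unit section (equivalently, because the presheaf $G_n$ is $\s H^{\le 0}(G_n)$ and $G_n(\spp)=\krn[n\colon G(\spp)\to G(\spp)]$). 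Substituting these two isomorphisms into the five-term sequence just established, and invoking the functoriality of \eqref{gpc} to see that the maps go over to the corresponding maps on group cohomology, produces the asserted exact sequence with $\check H^{\le i}$ replaced by $H^{\le i}(\Delta,G(\spp)_n)$. Exactness is preserved because it is preserved under isomorphism of the terms.

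I do not anticipate a genuine obstacle here: the substance of the theorem is the construction of \eqref{si1}, which is already complete in the excerpt, and the final statement is a matter of bookkeeping. If anything, the step requiring the most care is verifying that Proposition \ref{ch} is applicable to $G_n$ in place of $G$ — one must note that the hypotheses there (locally noetherian $S$, $f$ finite and faithfully flat, $G_n$ a commutative quasi-projective $S$-group scheme) hold, the quasi-projectivity of $G_n$ following from that of $G$ since $G_n$ is a closed subscheme of $G$. The \v Cech spectral sequence \eqref{chec} and the exact sequence \eqref{lchec} are formulated for an arbitrary abelian presheaf and a finite faithfully flat covering, so no smoothness of $G_n$ is needed; this is exactly why the conclusion is phrased in terms of $\check H^{\le i}(\spp\!/S,G_n)$ rather than $H^{\le i}((\spp)_\fl,G_n)$. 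With that observation in hand, the assembly is routine.
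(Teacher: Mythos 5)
Your proposal is correct and follows essentially the same route as the paper: the theorem is obtained by combining the exactness of \eqref{si1} for $r=1$ with the identifications \eqref{pri}, \eqref{seg}, Proposition \ref{ch} applied to $G_{\lbe n}$, the injection $\cok f_{G_{n}}^{(1)}\hookrightarrow \check{H}^{\le 2}(\spp\!/\be S, G_{\lbe n}\lbe)$ from \eqref{lchec}, and \eqref{gpc} for the Galois case. Your added remarks on why Proposition \ref{ch} and \eqref{gpc} apply to $G_{\lbe n}$ (representability, $G_{\lbe n}(\spp)=G(\spp)_{n}$) are sound and match what the paper leaves implicit.
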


\begin{remark}\label{maps} The nontrivial maps in the sequence of the theorem can be described as follows. The map $G(S\e)\le\cap\le G(\spp\le)^{n}\be/G(S\e)^{n}\to \check{H}^{\le 1}(\spp\!/\be S, G_{\lbe n}\lbe)$ is the composition of the inverse of the isomorphism \eqref{pri} and the top horizontal arrow in the following commutative square
\[
\xymatrix{\krn f^{(0)}_{/n}\ar@{^{(}->}[d]\,\ar@{^{(}->}[r]&  \check{H}^{\le 1}(\spp\!/\be S, G_{\lbe n}\lbe)\ar@{^{(}->}[d]^(.45){e^{\lle 1}_{\lle G_{\lbe n}}}\\
G(S\e)/n \,\ar@{^{(}->}[r]&  H^{\le 1}(S_{\fl},G_{\lbe n}),
}
\] 
where $e^{\le 1}_{\le G_{\lbe n}}$ is the edge morphism in the exact sequence \eqref{lchec} associated to $G_{n}$ and the bottom horizontal arrow is the left-hand nontrivial map in \eqref{ab} for $r=1$ and $T=S$. The map $\check{H}^{\le 1}(\spp\!/\be S, G_{\lbe n}\lbe)\to \krn\e j_{\le G}^{\le(1)}$ is the composition of the map $\check{H}^{\le 1}(\spp\!/\be S, G_{\lbe n}\lbe)\to \check{H}^{\le 1}(\spp\!/\be S, G\e)$ induced by the immersion $G_{n}\hookrightarrow G$ and the isomorphism $\check{H}^{\le 1}(\spp\!/\be S, G\e)\isoto \krn\e f_{\le G}^{\le(1)}=\krn\e j_{\le G}^{\le(1)}$ \eqref{kj} induced by the edge morphism $e^{\le 1}_{\le G}$ in \eqref{lchec} (note that the latter map is the inverse of the isomorphism of Proposition \ref{ch}). The map $\krn\e j_{\le G}^{\le(1)}\to\Psi(G,f\le)/G(S\e)\le G(\spp)^{n}$ is the composition of the connecting morphism $\krn\e j_{\le G}^{\le(1)}=\krn\e \big(\le f_{\lbe G}^{\le(1)}\lbe\big)_{\! n}\to \cok f^{(0)}_{/n}$ in the snake-lemma exact sequence arising from diagram \eqref{diag} and the isomorphism \eqref{seg}. Finally, $\Psi(G,f\le)/G(S\e)\le G(\spp)^{n}\to \check{H}^{\le 2}(\spp\!/\be S, G_{\lbe n}\lbe)$ is the composition of the inverse of the isomorphism \eqref{seg}, the map $\cok f^{(0)}_{/n}\to \cok f_{G_{ n}}^{(1)}$ induced by  $\check{H}^{\le 0}(\spp\!/\lbe S,\varphi)$, where $\varphi\colon \s H^{\le 0}\be(G\e)/n\to \s H^{\le 1}\be(G_{n})$ is the left-hand nontrivial map in the exact sequence \eqref{abs} for $r=1$, and the injection $\cok f_{G_{ n}}^{(1)}\hookrightarrow \check{H}^{\le 2}(\spp\!/\be S, G_{\lbe n}\lbe)$ induced by the transgression map $d_{2}^{\,0,1}$ appearing in the exact sequence \eqref{lchec}.
\end{remark}

\smallskip

Via the identifications $H^{\e 1}\lbe(S_{\et},\bg_{m})=\pic S$ and $H^{\e 1}\lbe(\spp_{\et},\bg_{m})=\pic \spp$, the first restriction map
$j_{\e\bg_{m,\e S}}^{(1)}\colon H^{\le 1}(S_{\et}, \bg_{m})\to H^{\le 1}(\spp_{\et}, \bg_{m})$ \eqref{cap1} associated to the pair $(\bg_{m,\e S},f\e)$ can be identified with the canonical map $\pic f\colon \pic S\to \pic \spp$ \eqref{pmap}. Further, by Example \ref{exs}(a), $(\bg_{m,\e S},f\e)$ is an admissible pair. Thus the theorem immediately yields

\begin{corollary}\label{mcor} Let $f\colon \spp\to S$ be a finite and faithfully flat morphism of rank $n\geq 2$ between connected noetherian schemes.
Then there exists a canonical exact sequence of $n$-torsion abelian groups
\[
\begin{array}{rcl}
1&\to& U(S\e)\be\cap\be U(\spp\le)^{n}\be/U(S\e)^{n}\to \check{H}^{\le 1}(\spp\!/\be S, \mu_{\e n}\lbe)\to \krn\e \pic\be f\\
&\to& \Psi(U,f\le)/U(S\e)\e U(\spp\le)^{n}\to \check{H}^{\le 2}(\spp\!/\be S,  \mu_{\e n}\lbe),
\end{array}
\]
where $U$ is the global units functor \eqref{gu}, $\pic\be f$ is the canonical map \eqref{pmap} and $\Psi(U,f\le)\subset U(\spp\le)$ is the group \eqref{ugn}.
\end{corollary}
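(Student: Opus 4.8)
The plan is to obtain Corollary \ref{mcor} as the instance $G=\bg_{m,S}$ of Theorem \ref{big}, after rewriting the resulting sequence in classical notation. First I would check that $(\bg_{m,S},f\e)$ is admissible in the sense of Definition \ref{adm}: since $S$ is connected and noetherian and $f$ is finite and faithfully flat, $f$ has constant rank, equal to $n\geq 2$ by hypothesis, so (i) and (ii) hold; and $\bg_{m,S}$ is a split torus, hence smooth, commutative, quasi-projective and of finite presentation over $S$ with connected fibres, and is semiabelian over every residue field, so (iii) and (iv) hold as well. This is exactly Example \ref{exs}(a). Consequently Theorem \ref{big} yields a canonical exact sequence of $n$-torsion abelian groups
\[
\begin{array}{rcl}
0&\to& \bg_{m,S}(S)\cap\bg_{m,S}(\spp)^{n}/\bg_{m,S}(S)^{n}\to \check{H}^{\le 1}(\spp\!/\be S,(\bg_{m,S})_{n})\to \krn\,j_{\bg_{m,S}}^{(1)}\\
&\to& \Psi(\bg_{m,S},f\e)/\bg_{m,S}(S)\,\bg_{m,S}(\spp)^{n}\to \check{H}^{\le 2}(\spp\!/\be S,(\bg_{m,S})_{n}),
\end{array}
\]
with the maps described in Remark \ref{maps}.

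Next I would carry out four routine identifications. By \eqref{gu} one has $\bg_{m,S}(X)=U(X)$ for every $S$-scheme $X$, which rewrites the first and fourth terms and the denominator of the third as the groups built from $U(S)$ and $U(\spp)$ that appear in the corollary; comparing \eqref{bgn} with \eqref{ugn} shows simultaneously that $\Psi(\bg_{m,S},f\e)=\Psi(U,f\e)$. By the definition of the $n$-torsion subsheaf recalled in Section \ref{pre}, $(\bg_{m,S})_{n}=\mu_{n,S}$, whence $\check{H}^{\le i}(\spp\!/\be S,(\bg_{m,S})_{n})=\check{H}^{\le i}(\spp\!/\be S,\mu_{n})$ for $i=1,2$ in the notation fixed there. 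Finally, under the identifications $H^{\le 1}(S_{\et},\bg_{m})=\pic S$ and $H^{\le 1}(\spp_{\et},\bg_{m})=\pic \spp$ of \cite[Theorem 4.9, p.~124]{mi1}, the restriction map $j_{\bg_{m,S}}^{(1)}$ corresponds to $\pic f$ \eqref{pmap}, as noted just before the statement of the corollary; hence $\krn\,j_{\bg_{m,S}}^{(1)}=\krn\,\pic f$. Substituting these identifications into the displayed sequence (and writing $1$ for the trivial group, in keeping with the multiplicative notation for unit groups) gives precisely the five-term exact sequence of the corollary.

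There is no genuine obstacle: all the content resides in Theorem \ref{big}, and the corollary is merely its case $G=\bg_{m,S}$ written out in the language of classical algebraic number theory. The one point that warrants a word of care is the equality $(\bg_{m,S})_{n}=\mu_{n,S}$ together with the compatibility of the two notations for the \v{C}ech cohomology groups involved, both of which are immediate from the conventions of Section \ref{pre}.
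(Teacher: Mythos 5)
Your proposal is correct and follows exactly the paper's own route: verify that $(\bg_{m,\e S},f\e)$ is admissible via Example \ref{exs}(a), apply Theorem \ref{big}, and translate via $(\bg_{m,\e S})_{n}=\mu_{\le n,\e S}$, $\bg_{m,\e S}(X)=U(X)$ and the identification of $j^{(1)}_{\e\bg_{m,\e S}}$ with $\pic\be f$. The paper states this in a single sentence before the corollary, so your write-up is merely a more detailed version of the same argument.
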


\section{Capitulation of ideal classes}\label{last}

In this final section we specialize Corollary \ref{mcor} to the case $S=\spec \ofs$. The notation is as in the Introduction.

\smallskip

As noted in the Introduction, the capitulation map \eqref{ccap} can be identified with the canonical map $\pic\be f$ \eqref{pmap}.

Now observe that $\sppp=\spp\times_{S}\spp=\spec(\oks\le\otimes_{\le\ofs}\le\oks\le)$ and the canonical projection morphisms
$p_{\le i}\colon \sppp\to\spp$ for $i=1$ and $2$ are induced, respectively, by the morphisms of rings $\oks\to\okfs$ given by $x\mapsto x\otimes 1$ and $x\mapsto 1\otimes x$, where $x\in\oks$. Thus, if $G=\bg_{m,\e S}$, then the morphisms of abelian groups $p_{\lle i}^{\lle *}=G(\e p_{\lle i})\colon G(\spp\le)\to G(\sppp\le)$ induced by $p_{\le i}$ are the maps $\oks^{\e *}\to (\okfs)^{*}$ defined by $u\mapsto u\otimes 1$ and $u\mapsto 1\otimes u$, where $u\in\oks^{\e *}$. Consequently, the group $\Psi(U,f\e)$ \eqref{ugn} can be identified with
\begin{equation}\label{cool}
\Psi(K/F,\Sigma\e)=\{\e u\in \oks^{\e *}\colon u\otimes u^{-1}\in ((\okfs)^{*})^{n}\e\}.
\end{equation}

Thus Corollary \ref{mcor} yields the following statement.

\begin{proposition}\label{mcor2} There exists a canonical exact sequence of finite $n$-torsion abelian groups
\[
\begin{array}{rcl}
0&\to& \ofs^{\e *}\be\cap\be (\oks^{\e *}\e)^{n}\be/(\ofs^{\e *}\e)^{n}\to \check{H}^{\le 1}(\oks\e/\ofs, \mu_{\le n})\to \krn\e j_{K\be/\lbe F,\e\si}\\
&\to& \Psi(K/F,\Sigma\le)/\ofs^{\e *}\e (\oks^{\e *})^{n}\to \check{H}^{\le 2}(\oks\e/\ofs, \mu_{\le n}),
\end{array}
\]
where $j_{K\be/\lbe F,\e\si}$ is the capitulation map \eqref{ccap} and $\Psi(K/F,\Sigma\e)$ is the subgroup of $\oks^{\e *}$ given by \eqref{cool}.
\end{proposition}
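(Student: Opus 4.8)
The plan is to derive Proposition \ref{mcor2} as a direct specialization of Corollary \ref{mcor} to the arithmetic setting $S=\spec\ofs$, $\spp=\spec\oks$, with $f\colon\spp\to S$ the morphism induced by the inclusion $\ofs\hookrightarrow\oks$. First I would verify the hypotheses of Corollary \ref{mcor}: since $K/F$ is a finite extension of global fields of degree $n\geq 2$ and $\si$ contains the archimedean primes, $\oks$ is a free $\ofs$-module of rank $n$, so $f$ is finite and faithfully flat of constant rank $n$, and both rings are Dedekind domains, hence $S$ and $\spp$ are connected noetherian schemes. Thus Corollary \ref{mcor} applies verbatim and yields the five-term exact sequence with $U(S\e)=\ofs^{\e *}$, $U(\spp\e)=\oks^{\e *}$, $\pic S\simeq C_{F,\si}$, $\pic\spp\simeq C_{K,\si}$.

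Next I would make the three translations needed to put the sequence into the stated form. (1) The identification of $\pic f$ with the capitulation map $j_{K\be/\lbe F,\e\si}$: this was recorded in the Introduction and is restated at the beginning of section \ref{last}; under the isomorphisms $C_{F,\si}\simeq\pic S$, $C_{K,\si}\simeq\pic\spp$, the extension-of-ideals map corresponds to $[\e E\e]\mapsto[\e E\times_S\spp\,]$, so $\krn\e\pic f\simeq\krn\e j_{K\be/\lbe F,\e\si}$. (2) The identification $\Psi(U,f\e)\simeq\Psi(K/F,\Sigma\e)$: here I would observe that $\sppp=\spec(\oks\otimes_{\ofs}\oks)$ and that the two projections $p_1,p_2\colon\sppp\to\spp$ are induced by $x\mapsto x\otimes 1$ and $x\mapsto 1\otimes x$; hence on units $p_1^{\le *}(u)\,p_2^{\le *}(u)^{-1}=u\otimes u^{-1}\in(\okfs)^{*}$, and the membership condition $u\otimes u^{-1}\in((\okfs)^{*})^{n}$ is precisely \eqref{cool}. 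So $\Psi(U,f\e)/U(S\e)U(\spp\e)^{n}=\Psi(K/F,\Sigma)/\ofs^{\e *}(\oks^{\e *})^{n}$, as displayed in section \ref{last}. (3) The \v{C}ech cohomology terms are literally $\check{H}^{\le 1}(\oks/\ofs,\mu_n)$ and $\check{H}^{\le 2}(\oks/\ofs,\mu_n)$ once one writes $\spp/S$ in terms of the rings.

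Finally I would address finiteness of the groups in the sequence, which is the one point not formally immediate from Corollary \ref{mcor} (that corollary only asserts the groups are $n$-torsion). Here I would argue that $\krn\e j_{K\be/\lbe F,\e\si}$ is a subgroup of the finite group $C_{F,\si}$, hence finite; that $\ofs^{\e *}\cap(\oks^{\e *})^{n}/(\ofs^{\e *})^{n}$ and $\Psi(K/F,\Sigma)/\ofs^{\e *}(\oks^{\e *})^{n}$ are finite because $\ofs^{\e *}$ and $\oks^{\e *}$ are finitely generated abelian groups by the Dirichlet--Chevalley--Hasse $\si$-unit theorem, so all their subquotients killed by $n$ are finite; and that $\check{H}^{\le 1}(\oks/\ofs,\mu_n)\simeq\krn\e j_{\mu_n,\fl}^{(1)}$ by Proposition \ref{ch}, which sits inside the finite group $H^1(\spec\ofs{}_{\fl},\mu_n)$ (finite by Kummer theory and finiteness of class groups and $\si$-units), while $\check{H}^{\le 2}(\oks/\ofs,\mu_n)$ need only be known to be $n$-torsion for the statement, though in the Galois case it equals $H^2(\Delta,\mu_n(K))$ and is finite. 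I expect the only mild obstacle to be bookkeeping: making sure the $n$-torsion and finiteness claims are each pinned to an explicit earlier result, and that the identification of the maps matches Remark \ref{maps}. No new mathematical idea is required; Proposition \ref{mcor2} is Corollary \ref{mcor} together with the dictionary of section \ref{last}.
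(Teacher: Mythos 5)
Your proposal is correct and follows exactly the paper's own route: section \ref{last} obtains Proposition \ref{mcor2} by specializing Corollary \ref{mcor} to $S=\spec\ofs$, $\spp=\spec\oks$, identifying $\pic f$ with the capitulation map and $\Psi(U,f\e)$ with $\Psi(K/F,\Sigma\e)$ via the two ring maps $x\mapsto x\otimes 1$ and $x\mapsto 1\otimes x$. Your extra paragraph on finiteness (via finiteness of $C_{F,\si}$ and finite generation of the $\Sigma$-unit groups) is a sensible piece of bookkeeping that the paper leaves implicit.
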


If $\Sigma$ contains all primes of $F$ that ramify in $K$, then $f\colon \spec \oks\to\spec \ofs$ is a Galois covering with Galois group $\Delta$. Thus the following statement is immediate from the proposition using \eqref{gpc}.

\begin{corollary}\label{mcor3} Let $K/F$ be a finite Galois extension of global fields with Galois group $\Delta$ of order $n$ and let $\Sigma$ be a nonempty finite set of primes of $F$ containing all archimedean primes and all primes that ramify in $K$. Then there exists a canonical exact sequence of finite $n$-torsion abelian groups
\[
\begin{array}{rcl}
0&\to& \ofs^{\e *}\be\cap\be (\oks^{\e *}\le)^{n}\be/(\ofs^{\e *}\le)^{n}\to H^{\le 1}(\Delta, \mu_{\le n}(K\e))\to \krn\e j_{K\be/\lbe F,\e\si}\\
&\to& \Psi(K/F,\Sigma\le)/\ofs^{\e *}\e (\oks^{\e *})^{n}\to H^{\le 2}(\Delta, \mu_{\le n}(K\e)).
\end{array}
\]
Consequently, if $n$ is odd and $\mu_{\le n}(K\e)=\{1\}$, then there exists  a canonical isomorphism of finite $n$-torsion abelian groups
\[
\krn\e j_{K\be/\lbe F,\e\si}\isoto \Psi(K/F,\Sigma\le)/\ofs^{\e *}\e (\oks^{\e *})^{n}.
\]
In particular, at most $[\e\oks^{\e *}\e\colon \ofs^{\e *}\e (\oks^{\e *})^{n}\e]$ $\Sigma$-ideal classes of $F$ capitulate in $K$. 
\end{corollary}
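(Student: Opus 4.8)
The plan is to deduce Corollary \ref{mcor3} from Proposition \ref{mcor2} by converting its two \v{C}ech cohomology groups into Galois cohomology groups and then specializing; the deduction is essentially formal once that translation is in place. First I would note that, since $\Sigma$ contains every prime of $F$ that ramifies in $K$, the finite faithfully flat morphism $f\colon\spec\oks\to\spec\ofs$ is \'etale; combined with $K/F$ being Galois with group $\Delta$, the canonical $\spec\ofs$-morphism $\coprod_{\delta\in\Delta}\spec\oks\to\spec\oks\times_{\spec\ofs}\spec\oks$ is an isomorphism, so $f$ is a Galois covering with Galois group $\Delta$ in the sense recalled before \eqref{gpc}. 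Since $\mu_{\le n,\e S}$ is representable, it carries finite disjoint unions of schemes to products of abelian groups, so \eqref{gpc} supplies canonical isomorphisms $\check{H}^{\le i}(\oks\e/\ofs,\mu_{\le n})\simeq H^{\le i}(\Delta,\mu_{\le n,\e S}(\spec\oks))$ for $i=1,2$; and because an $n$-th root of unity is integral over $\Z$ (indeed a unit of $\oks$) one has $\mu_{\le n,\e S}(\spec\oks)=\mu_{\le n}(\oks)=\mu_{\le n}(K\e)$ as $\Delta$-modules. Substituting these identifications into the exact sequence of Proposition \ref{mcor2} produces the asserted five-term exact sequence, and its terms remain finite and $n$-torsion because \eqref{gpc} preserves both properties.

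For the ``consequently'' clause, assume $n$ is odd and $\mu_{\le n}(K\e)=\{1\}$, so that $\mu_{\le n}(K\e)$ is the zero $\Delta$-module and hence $H^{\le 1}(\Delta,\mu_{\le n}(K\e))=H^{\le 2}(\Delta,\mu_{\le n}(K\e))=0$. Reading the five-term sequence with these two groups replaced by $0$: the leftmost term injects into $0$, whence $\ofs^{\e*}\cap(\oks^{\e*})^{n}=(\ofs^{\e*})^{n}$; the map $\krn j_{K\be/\lbe F,\e\si}\to\Psi(K/F,\Sigma)/\ofs^{\e*}(\oks^{\e*})^{n}$ has kernel equal to the image of the zero group $H^{\le 1}$, hence is injective; and it has cokernel embedding into $H^{\le 2}=0$, hence is surjective. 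Therefore $\krn j_{K\be/\lbe F,\e\si}\isoto\Psi(K/F,\Sigma)/\ofs^{\e*}(\oks^{\e*})^{n}$, and this isomorphism is canonical because it is a bijective arrow in the canonical sequence just constructed.

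Finally, a $\Sigma$-ideal class of $F$ capitulates in $K$ precisely when it lies in $\krn j_{K\be/\lbe F,\e\si}$, so the number of capitulating classes is $|\krn j_{K\be/\lbe F,\e\si}|=|\Psi(K/F,\Sigma)/\ofs^{\e*}(\oks^{\e*})^{n}|\le[\e\oks^{\e*}\colon\ofs^{\e*}(\oks^{\e*})^{n}\e]$, the inequality holding because $\Psi(K/F,\Sigma)\subseteq\oks^{\e*}$, and the index being finite because $\oks^{\e*}$ is finitely generated, so $\oks^{\e*}/(\oks^{\e*})^{n}$ is finite. The only point in the whole argument that is not pure bookkeeping with Proposition \ref{mcor2} is the one invoked at the outset: that the ramification hypothesis on $\Sigma$ genuinely makes $f$ a Galois covering of \emph{schemes} (so that \eqref{gpc} is available) and that the section group $\mu_{\le n,\e S}(\spec\oks)$ is $\mu_{\le n}(K\e)$ equipped with its natural $\Delta$-action; granting that, everything else follows formally.
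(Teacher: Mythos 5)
Your proposal is correct and follows the same route as the paper: the paper's own proof simply observes that the ramification hypothesis makes $f\colon \spec\oks\to\spec\ofs$ a Galois covering with group $\Delta$ and then invokes \eqref{gpc} to rewrite the \v{C}ech groups of Proposition \ref{mcor2} as $H^{\le i}(\Delta,\mu_{\le n}(K\e))$, after which the vanishing consequences are read off exactly as you do. Your added details (the \'etaleness of $f$, the identification $\mu_{\le n,\e S}(\spec\oks)=\mu_{\le n}(K\e)$, and the finiteness of the index) are all accurate elaborations of what the paper leaves implicit.
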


\smallskip

\begin{remark} The group $\Psi(K/F,\Sigma\e)$ \eqref{cool} seems difficult to compute in general since the structure of $(\okfs)^{*}$ is rather mysterious. Perhaps a more sensible approach would be to investigate whether or not $\Psi(K/F,\Sigma\e)$ satisfies a Hasse principle, i.e., whether the condition that defines $\Psi(K/F,\Sigma\e)$, namely $u\otimes u^{-1}\in ((\okfs)^{*})^{n}$, is determined locally. In general, if $B$ is an $A$-algebra, where $A$ and $B$ are (commutative and unital) rings, then there exists a canonical map $B^{\e*}\times B^{\e*}\to (B\otimes_{\lbe A}\be B\e)^{*},(b_{\le 1},b_{\le 2})\mapsto b_{\le 1}\otimes b_{\le 2},$ which is not surjective in general. When $A$ is a field and $B$ and $C$ are finitely generated $A$-algebras, the structure of $(B\otimes_{\lbe A} C\e)^{*}$ has been discussed by Jaffe \cite{jaf}. However, no general structure theorem for $(B\otimes_{\lbe A}\be B\e)^{*}$ seems to be known. Of course, it may still be possible to determine the subgroup $\Psi(K/F,\Sigma\e)$ of $\oks^{\e *}$ explicitly is some particular cases of interest and then apply Corollary \ref{mcor3} to obtain nontrivial information on the capitulation kernel, e.g., when  $K/F$ is a (possibly ramified) quadratic extension of number fields. For the quadratic unramified case (when $F$ is a totally real quadratic number field), see \cite{bss}.  
\end{remark}

\end{document}